\newtheorem{lemma}{Lemma}
\newtheorem{theorem}{Theorem}
\theoremstyle{definition}
\DeclareMathOperator*{\diag}{diag}
\DeclareMathOperator*{\blkdiag}{blkdiag}
\title{Resilient Distributed Recovery of Large Fields}
\name{Yuan Chen, Soummya Kar, and Jos\'{e} M. F. Moura \thanks{\noindent This material is based upon work supported by the Department of Energy under award number DE-OE0000779, by DARPA under agreement numbers FA8750-12-2-0291 and HR00111320007, and by the National Science Foundation under award numbers CCF1513936 and CNS1837607.}}
\address{Carnegie Mellon University \\
	Department of Electrical and Computer Engineering \\
	Pittsburgh, PA 15213 USA}
\begin{document}
\ninept
\maketitle
\begin{abstract}
This paper studies the resilient distributed recovery of large fields under measurement attacks, by a team of agents, where each measures a small subset of the components of a large spatially distributed field. An adversary corrupts some of the measurements.
The agents collaborate to process their measurements, and each is interested in recovering only a fraction of the field.  We present a field recovery \textit{consensus+innovations} type distributed algorithm that is resilient to measurement attacks, where an agent maintains and updates a local state based on its neighbors states and its own measurement. Under sufficient conditions on the attacker and the connectivity of the communication network, each agent's state, even those with compromised measurements, converges to the true value of the field components that  it is interested in recovering. Finally, we illustrate the performance of our algorithm through numerical examples.
\end{abstract}

\section{Introduction}\label{sect: intro}
In many applications in the Internet of Things (IoT), device instrument a large environment and measure a spatially distributed field. For example, a network of roadside units measures traffic patterns throughout a city~\cite{TrafficRSU}, and teams of mobile robots collaborate to map and navigate unknown environments~\cite{RobotMapping}. The devices need to process their   measurements to extract useful information about the physical field.  IoT devices, however, are vulnerable to cyber attack~\cite{ChenIoT, ZhangIoT}. Without proper security countermeasures, adversaries may hijack individual devices, manipulate their measurements, and prevent them from achieving their computation objectives. %In a smart city, a network of roadside units may recover traffic conditions to effectively control traffic signals and reduce vehicular congestion~\cite{TrafficRSU, ConnectedRoads}. 

This paper studies the distributed recovery of large physical fields %(e.g., traffic conditions throughout a city or the locations of obstacles in a large unkown environment) 
under measurement attacks. The agents or devices make measurements of the unknown field in their proxomity, and process their measurements to recover the value of the field. Due to the field's large size, no individual agent seeks to recover the \textit{entire} field. Instead, each agent seeks to recover a subset of the field components. For example, in multi-robot navigation, an individual robot attempts to recover just its local surroundings instead of mapping the entire environment. The devices are unable to recover their desired components of the field using just their local measurements; they share information over a communication network to accomplish their processing objectives, but an adversary may attempt to thwart this goal by arbitrarily manipulating a subset of the measurements. Each agent's goal is to process its measurement and information from its network neighbors to recover specific components of the field without being misled by the adversary. %\footnote{Different devices are interested in recovering different subsets of field components.} without being misled by the adversary. 

We present a \textit{consensus+innovations} type algorithm~\cite{Kar1, Kar2} for resilient field recovery. Device maintain and update a local state (an estimate of the field components of interest) based on the state of its neighbors in the network and its own measurements. When updating its state, each device applies an adaptive state dependent gain to its own measurements to mitigate the effects of potential measurement attacks. We show that, under sufficient conditions on the compromised measurements and on the connectivity of the communication network, our algorithm ensures that all of the agents' local states converge to the true values of their desired field components.

%Individual %We consider an adversary who manipulates a subset of the devices' measurements. 
% Without proper security countermeasures, the effects of attacks on individual devices propagate over the communication network and interfere with the functionality of other connected devices. Existing work in resilient computation has focused on settings where all devices or agents have the same, common processing goal. In resilient consensus, the objective is for a team of (nonadversarial) devices or agents to reach the \textit{same} decision or processing results by exchanging messages with each other while adversarial agents attempt to disrupt the agreement process by transmitting false messages~\cite{ByzantineGenerals, ResilientConsensus, Pasqualetti1, LeBlanc1}. 

Prior work in resilient computation has focused on settings where all devices or agents share a common processing objective. For example, in resilient consensus, agents attempt to reach agreement on a decision or value in the presence of adversaries~\cite{ByzantineGenerals, ResilientConsensus, LeBlanc1} and in resilient parameter estimation, agents attempt to recover a common unknown parameter from local measurements while coping with malicious data~\cite{LeBlanc2, ChenDistributed2, ChenSAGE}. In contrast, in resilient field recovery, agents have different, heterogeneous processing objectives. This makes the problem more challenging: when communicating with neighbors, agents must further process their neighbors' messages to extract information relevant to its own objectives.

Existing work has studied field recovery in \textit{nonadversarial} environments.  In~\cite{Cortes}, the authors design a procedure to optimally place sensors in a spatially correlated field. Reference~\cite{SahuRandomFields} studies distributed recovery of static fields, and reference~\cite{KhanKF} designs distributed Kalman Filters for estimating very large time-varying random fields. None of these references, however, address field recovery in \textit{adversarial} scenarios. In contrast, this paper presents an algorithm for \textit{resilient} field recovery under measurement attacks. The rest of this paper is organized as follows.  Section~\ref{sect: background} reviews the measurement and attack models and formalizes the field recovery problem. We present our resilient distributed field recovery algorithm in Section~\ref{sect: algorithm} and analyze its performance in Section~\ref{sect: analysis}. Section~\ref{sect: examples} illustrates the performance of our algorithm through numerical examples, and we conclude in Section~\ref{sect: conclusion}.

\textit{Notation}: Let $\mathbb{R}^k$ be the Euclidean space of dimension $k$, $I_k$ the $k$ by $k$ identity matrix, and $\mathbf{1}_k$ the column vector of $k$ ones. The $j^{\text{th}}$ canonical basis vector of $\mathbb{R}_k$ ($j = 1, \dots, k$) is $e_j$, a column vector with $1$ in the $j^{\text{th}}$ element and $0$ elsewhere. For symmetric matrices $A = A^\intercal$, $A \succ 0$ ($A \succeq 0$) means that $A$ is positive definite (semidefinite). For a matrix $A$, $[A]_{i, j}$ is the element in the $i^{\text{th}}$ row and $j^{\text{th}}$ colunm. For a vector $v$, $[v]_{i}$ is the $i^{\text{th}}$ element. A simple undirected graph $G = (V, E)$ has vertex set $V = \left\{ 1, \dots, N \right\}$ and edge set $E$. Each vertex $n$ has neighborhood $\Omega_n$ (the set of vertices that share an edge with vertex $n$) and degree $d_n = \left\lvert \Omega_n \right\rvert$. The degree matrix of $G$ is $D = \diag\left( d_1, \dots, d_N \right)$, the adjacency matrix is $A$, where $[A]_{n, l} = 1$ if there is an edge between vertex $n$ and vertex $l$ and $[A]_{n, l} = 0$ otherwise, and the Laplacian matrix is $L = D - A$. The Laplacian matrix has ordered eigenvalues $\lambda_1(L) \leq \lambda_2 (L) \leq \cdots \leq \lambda_N(L)$, where $\lambda_1(L) = 0$. For connected graphs $\lambda_2(L) > 0$. References~\cite{Spectral, ModernGraph} review spectral graph theory.

\section{Background}\label{sect: background}
%\subsection{Measurement Model}
Consider $N$ agents or devices measuring an unknown field collected in the parameter $\theta^* \in \mathbb{R}^M$. The field parameter $\theta^*$ is high dimensional and spatially distributed over a large physical area; for example, in the context of multi-robot navigation, it may represent the location of obstacles in a large unknown environment. 
In normal operation conditions (i.e., in the absence of measurement attacks), each agent's measurement is
\begin{equation}\label{eqn: measurementModel}
	y_n = H_n \theta^*.
\end{equation}
The measurements may have different dimensions across agents: we let $P_n \ll M$ be the dimension of agent $n$'s measurement (i.e., $y_n(t) \in \mathbb{R}^{P_n}$ and $H_n \in \mathbb{R}^{P_n \times M}$). %A subset of the agents' measurements falls under adversarial attack. In this paper, we consider the measurement attack model from~\cite{ChenSAGE, ChenDistributed2}, where the adversary arbitrarily manipulates a subset of the measurement values. 
An adversary changes arbitrarily manipulates a subset of the measurement values. We model the effect of the attack with the additive disturbance $a_n$:
\begin{equation}\label{eqn: attackedMeasurement}
	y_n = H_n \theta^* + a_n.
\end{equation}
In this paper, we focus on distributed field recovery from a single snapshot of (noiseless) measurements at each agent. The case of noisy measurement streams (i.e., \textit{sequences} of measurements over time $y_n(0), y_n(1), \dots$) is the subject of our ongoing work~\cite{ChenSAFE}. %For future work, we will study resilient distributed field recovery with (nosiy) measurement streams, i.e., \textit{sequences} of measurements over time $y_n(0), y_n(1), \dots$.

We use the convention from~\cite{ChenSAGE} for indexing scalar measurement globally across all agents. Let
\begin{equation}\label{eqn: stackedMeasurement}
	\mathbf{y}_t = \left[ \begin{array}{ccc} y_1^\intercal & \cdots & y_N^\intercal \end{array} \right]^\intercal = \mathcal{H} \theta^* + \mathbf{a},
\end{equation}
be the vector of all (scalar) measurements at time $t$, where $\mathcal{H} = \left[\begin{array}{ccc} H_1^\intercal & \cdots & H_N^\intercal \end{array} \right]^\intercal$ stacks the measurement matrices $H_1, \dots, H_N$, and, similarly, $\mathbf{a}$ stacks the measurement attacks. The stacked measurement $\mathbf{y}$ has dimension $P = \sum_{i = 1}^N P_n$. We label the individual components of $\mathbf{y}$ and the rows of $\mathcal{H}$ from $1$ to $P$:
%\begin{align} \label{eqn: index1}
	$\mathbf{y} = \left[ \begin{array}{ccc} y^{(1)} & \cdots & y^{(P)} \end{array} \right]^\intercal,$
	$\mathcal{H} = \left[\begin{array}{ccc} h_1 & \cdots & h_p \end{array} \right]^\intercal.$ %\label{eqn: index2}
%\end{align}
From the above indexing convention, we assign the indices $\overline{P}_n + 1, \dots, \overline{P}_n + P_n$, where $\overline{P}_n = \sum_{j = 0}^{n-1} P_j$, to the individual components of $y_n$, the individual components of $a_n$ and rows of $H_n$ (of agent $n$). We assume that every row of $\mathcal{H}$ is nonzero and has unit $\ell_2$-norm, i.e., $\left\lVert h_p \right\rVert_2 =1 $ for all $p = 1. \dots, P$. The set $\mathcal{A} = \left\{ p \in \left\{1, \dots, P \right\} \vert  a^{(p)} \neq 0 \right\}$ is the set of compromised measurements, and $\mathcal{N} = \left\{ 1, \dots, P \right\} \setminus \mathcal{A}$ is the set of uncompromised measurements. The agents do not know which measurements are compromised.

The field parameter $\theta^*$ is high dimensional and spatially distributed over a large physical area, so, each agent's measurement is only physically coupled to a few components of $\theta^*$. %In a multi-robot navigation scenario, where $\theta^*$ represents the state of a large unknown environment, each robot only measures a few components of $\theta^*$, corresponding to its nearby locations. 
The measurement matrices $H_n$ capture the {physical coupling} between the field $\theta^*$ and the measurements $y_n(t)$. We define the \textit{physical coupling set}
\begin{equation}\label{eqn: couplingSet}
	\widetilde{\mathcal{I}}_n = \left\{ m \in \left\{1, \dots, M \right\} \vert H_n e_m \neq 0\right\},
\end{equation}
where $e_m$ is the $m^{\text{th}}$ canonical basis vector of $\mathbb{R}^M$, as the indices of the nonzero \textit{columns} of the matrix $H_n$. The set $\widetilde{\mathcal{I}}_n$ describes all components of $\theta^*$ that are physically coupled to the measurement at agent $n$.

%\subsection{Distributed Field Recovery} 
In distributed field recovery, each agent is interested in recovering a subset of components of $\theta^*$. This contrasts with the setup of distributed \textit{parameter} estimation~\cite{ChenDistributed2, ChenSAGE}, where each agent is interested in estimating all components of $\theta^*$. In the context of robotic navigation, for example, each robot may only be interested in estimating its local surroundings instead of the entirety of the (large) unknown environment. 
For agent $n$, its \textit{interest set} $\mathcal{I}_n$ is the set of components of $\theta^*$ that it wishes to recover, sorted in ascending order. Following the convention from~\cite{SahuRandomFields}, the expression $\mathcal{I}_n \left(r\right) = m$ means that the $r^{\text{th}}$ element of the $\mathcal{I}_n$ (for $r = 1, \dots, \left\lvert \mathcal{I}_n \right\rvert$) is the component $m$ (i.e., the $m^{\text{th}}$ component of $\theta^*$). Conversely, $\mathcal{I}_n^{-1} \left( m \right) = r$ means that component $m$ is the $r^{\text{th}}$ element in $\mathcal{I}_n$. %We assume that the elements of $\mathcal{I}_n$ are in sorted ascending order. %: $\mathcal{I}_n(1) < \mathcal{I}_n (2) < \cdots < \mathcal{I}_n \left( \left\lvert \mathcal{I}_n \right\rvert \right)$. In the sequel, we use this indexing convention for $\mathcal{I}_n$ to compare interest sets between different agents.

We require each agent's physical coupling set to be a subset of its interest set, i.e.,
%\begin{equation}\label{eqn: interestSubset}
	$\widetilde{\mathcal{I}}_n \subseteq \mathcal{I}_n,$
%\end{equation}
for all $n = 1, \dots, N$. %Each agent must be at least interested in recovering all of the components physically coupled to its measurement. 
In addition to each agent's interest set, we also define, for each \textit{component} of $\theta^*$ $m = 1, \dots, M$,
%\begin{equation}\label{eqn: interestedSet}
	$\mathcal{J}_m = \left\{n \in \left\{1, \dots, N \right\} \vert m \in \mathcal{I}_n \right\},$
%\end{equation}
the set of all agents interested in recovering component $m$. We assume that, for all $m = 1, \dots, M$, the set $\mathcal{J}_m$ is nonempty. 

Each individual agent may not have enough information from its local measurements alone to recover all components in its interest set. %Again, in robotic navigation, individual robots may be interested in estimating positions of the environment that they do not directly measure. 
The agents exchange information over a communication network, modeled as an undirected graph $G = (V, E)$, where the vertex set $V$ is the set of agents and the edge set $E$ represents communication links between agents. For each component $m = 1, \dots, M$ of the field $\theta^*$, let $G_m$ be the graph induced by all agents interested in recovering $m$, i.e., all agents in $\mathcal{J}_m$. We assume that each subnetwork $G_m$ is connected for all $m$. %i.e., the Laplacian $L_m$ of $G_m$ satisfies $\lambda_2 \left( L_m \right) > 0$. 

An important concept in field and parameter recovery is global observability. We assume that the set of all measurements is globally observable for $\theta^*$: the observability Grammian matrix
%\begin{equation}
	$\mathcal{G} = \mathcal{H}^\intercal\mathcal{H} = \sum_{p = 1}^P h_p h_p^\intercal$
%\end{equation}
is invertible. Global observability means that, using the stacked measurement vector $\mathbf{y}_t$, it is possible to exactly determine the value of $\theta^*$. Global observability is required for a fusion center, which collects and simultaneously processes the measurements of \textit{all} of the agents, to recover $\theta^*$, so we assume it here for a fully distributed setting.

%\subsection{Attack Model}

%Recall, from the indexing convention established in Section~\ref{sect: background}, that we assign the indices $\overline{P}_n + 1, \dots, \overline{P}_n + P_n$ to the individual components of $y_n$. We also assign those indices to the components of $a_n$. %We say that measurement $p$ is compromised if, $a^{(p)} \neq 0$. 
%For analysis, let
%%\begin{equation}\label{eqn: compromisedSet}
%	$\mathcal{A} = \left\{ p \in \left\{1, \dots, P \right\} \vert  a^{(p)} \neq 0 \right\}$
%%\end{equation}
%be the set of compromised measurements, and let $\mathcal{N} = \left\{ 1, \dots, P \right\} \setminus \mathcal{A}$ be the set of uncompromised measurements. The agents do not know which measurements are compromised. We assume that the adversary may only compromise a subset of the measurements, i.e., $\left\lvert \mathcal{A} \right\rvert < P$. 

\section{Resilient Distributed Field Recovery}\label{sect: algorithm}
In this section, we present a distributed field recovery algorithm that is resilient to measurement attacks. %Our recovery algorithm is a \textit{consensus + innovations} type algorithm (\cite{Kar1, Kar2, Kar3, Kar4}), where each agent iteratively maintains and updates its state or estimate of the field using its current state, the states of its neighbors, and its local measurement. References~\cite{ChenSAGE, ChenDistributed2, ChenACC2, ChenCDC2} studied distributed \textit{parameter} recovery or estimation under measurement attacks, where the goal of each agent $n$ is to recover the \textit{entire} parameter $\theta^*$. In contrast, this paper addresses distributed \textit{field} recovery, where each agent is only interested in recovering a subset of the components of the field $\theta^*$. Reference~\cite{SahuRandomFields} also addressed distributed field recovery but does not consider resilience against measurement attacks. 

\subsection{Algorithm Description}\label{sect: algorithmDescription}
Each agent $n$ maintains a $\left\lvert {\mathcal{I}_n} \right\rvert$-dimensional state $x_n(t)$, where the $i^{\text{th}}$ component $\left[x_n(t) \right]_i$ is an estimate of $\left[\theta^*\right]_{\mathcal{I}_n(i)}$. Initially, each agent sets its state as $x_n(t) = 0$ and iteratively updates its state according to the following procedure.

\noindent \textbf{Step 1 -- Communication}: Each agent $n$ sends its current state $x_n(t)$ to each of its neighors $l \in \Omega_n$.

\noindent \textbf{Step 2 -- Message Censorship}: %Because agents have different interest sets, the states of neighboring agents may represent estimates of different components of $\theta^*$ and may have different dimension. %Each agent must process the states received from its neighbors in order to incorporate them into its own state update. 
To account for different interest sets, each agent processes the states received from it neighbors. First, each agent $n$, for each of its neighbors $l \in \Omega_n$, constructs a \textit{censored} state $x^c_{l, n} (t)$ component-wise as follows:

\begin{equation}\label{eqn: censorReceived}
	\!\left[x^c_{l, n}(t) \right]_i = \left\{ \begin{array}{ll} \left[x_{l}(t)\right]_{\mathcal{I}_l^{-1} \left(\mathcal{I}_n(i)\right)}, \!\! \!\!& \text{if } \mathcal{I}_n(i) \in \mathcal{I}_l,\!\! \\ 0, &\text{otherwise,} \end{array} \right.
\end{equation}
for $i = 1, \dots, \left\lvert \mathcal{I}_n \right \rvert$. %Agent $n$ places agent $l$'s estimate of $\left[ \theta^* \right]_{\mathcal{I}_n (i)}$ in the $i^{\text{th}}$ element of $x^c_{l, n}(t)$ if agent $l$ is also interested in estimating $\left[ \theta^* \right]_{\mathcal{I}_n (i)}$. Agent $l$'s estimate of $\left[ \theta^* \right]_{\mathcal{I}_n (i)}$ is located at the $j^{\text{th}}$ element of $x_l(t)$, where $j = \mathcal{I}_l^{-1} \left( \mathcal{I}_n \left(i \right) \right).$ If agent $l$ is not interested in estimating $\left[ \theta^* \right]_{\mathcal{I}_n (i)}$, then agent $n$ places a zero in the $i^{\text{th}}$ element of $x^c_{l, n}(t)$. 
Second, for each of its neighbors $l \in \Omega_n$, agent $n$ also constructs a processed version of its own state, $x^p_{l, n}(t)$, component-wise as follows:

\begin{equation}\label{eqn: censorSelf}
	\left[x^p_{l, n}(t) \right]_i = \left\{ \begin{array}{ll} \left[x_{n}(t)\right]_i, & \text{if } \mathcal{I}_n(i) \in \mathcal{I}_l, \\ 0, &\text{otherwise.} \end{array} \right.
\end{equation}
%The $i^{\text{th}}$ element of $x^p_{l, n}(t)$ is agent $n$'s estimate of $\left[\theta^*\right]_{\mathcal{I}_n(i)}$ if agent $l$ is also interested in recovering $\left[\theta^*\right]_{\mathcal{I}_n(i)}$ and is zero otherwise. 

\noindent \textbf{Step 3 -- State Update}: Each agent $n$ updates its state following
\begin{equation}\label{eqn: estimateUpdate}
\begin{split}
	x_n(t+1) = &x_n(t) - \beta_t \!\!\sum_{l \in \Omega_n(t)}\!\! {\left(x_{l, n}^p(t) - x_{l, n}^c(t)\right)}\\
	& + \alpha_t {H_n^c}^\intercal K_n(t) \left( y_n - H_n^c x_n(t)\right),
\end{split}
\end{equation}
where the matrix $H_n^c \in \mathbb{R}^{P_n \times \left\lvert \mathcal{I}_n \right\rvert}$ is the matrix $H_n$ after removing all \textit{columns} whose indices are not in $\mathcal{I}_n$, $K_n(t)$ is a diagonal gain matrix to be defined shortly, and $\alpha_t$ and $\beta_t$ are decaying weight sequences of the form 
%\begin{equation}\label{eqn: alphaBeta}
	$\alpha_t = \frac{a}{(t+1)^{\tau_1}}, \: \beta_t = \frac{b}{(t+1)^{\tau_2}}.$
%\end{equation}
We select the scalar hyperparmeters $a, b, \tau_1, \tau_2$ to satisfy $a, b > 0$ and $0 < \tau_2 < \tau_1 < 1.$ The gain matrix $K_n(t)$ is defined as
\begin{equation}\label{eqn: KnDef}
	K_n(t) = \diag \left(k_{\overline{P}_n + 1}(t), \dots, k_{\overline{P}_n + P_n}(t) \right),
\end{equation}
where, for $p = \overline{P}_n + 1, \dots, \overline{P}_n + P_n$,
\begin{equation}\label{eqn: smallK}
	k_p (t) = \min\left(1, \gamma_t \left\lvert y^{(p)} - {h^c_p}^\intercal x_n(t) \right \rvert^{-1} \right),
\end{equation}
${h_p^c}^\intercal$ is the row vector $h_p^\intercal$ after removing all components not in $\mathcal{I}_n$, and $\gamma_t$ is a decaying threshold sequence of the form
%\begin{equation}
	$\gamma_t = \frac{\Gamma}{(t+1)^{\tau_\gamma}}.$
%\end{equation}
We select the scalar hyperparameters $\Gamma$ and $\tau_\gamma$ to satisfy $\Gamma > 0$ and $0 < \tau_\gamma < \tau_1 - \tau_2$. 

The gain matrix $K_n(t)$ saturates the magnitude of each component of its innovation, $y_n - H_n^c x_n(t)$, at the threshold level, $\gamma_t$. The threshold $\gamma_t$ decays over time, which decreases the amount by which the innovation is able to influence the state update. Intuitively, this means that, initially, the agents are more willing to trust measurements that differ greatly from their current estimates of the field. As the agents update their states, they expect their estimates to move closer to the true values of the field, and they become less willing to trust measurements that differ greatly from their current estimates. By decaying the threshold over time, the agents prevent their states from being led astray by compromised measurements while still incorporating enough information from the uncompromised measurements to recover their desired field components.

Compared with algorithms for resilient distributed parameter recovery or estimation from~\cite{ChenSAGE, ChenDistributed2}, the algorithm in this paper introduces the additional message censorship step. This additional step is required because, unlike parameter recovery or estimation, where all of the agents are interested in recovering \textit{all} components of $\theta^*$, in distributed field recovery, each agent is only interested in recovering a subset of the components of $\theta^*$. Through message censorship, each agent $n$ extracts information from their neighbors' states about only the components of $\theta^*$ it is interested in recovering. 

\subsection{Main Result: Algorithm Performance}
%The main result of this paper characterizes the performance of the resilient distributed field recovery algorithm. 
For each agent $n$, let $\theta^*_{\mathcal{I}_n}$ be the $\left\lvert \mathcal{I}_n \right\rvert$-dimensional vector that collects all components of $\theta^*$ in which it is interested in recovering. We express $\theta^*_{\mathcal{I}_n}$ component-wise as
%\begin{equation}
	$\left[ \theta^*_{\mathcal{I}_n} \right]_i = \left[\theta^* \right]_{\mathcal{I}_n(i)}$
%\end{equation}
for all $i = 1, \dots, \left\lvert \mathcal{I}_n \right\rvert$. The following theorem characterizes the behavior of the agents' local states under our resilient distributed field recovery algorithm. 
\begin{theorem}\label{thm: main}
Let $\mathcal{A}=\left\{p_1, \dots, p_{\left\lvert \mathcal{A} \right\rvert} \right\}$ be the set of compromised measurements, and let $\mathcal{H}_{\mathcal{A}} = \left[\begin{array}{ccc} h_{p_1} & \cdots & h_{p_{\left\lvert \mathcal{A} \right\rvert}} \end{array} \right]^\intercal$ be the matrix that collects all rows of $\mathcal{H}$ indexed by elements in $\mathcal{A}$. If the matrix 
%\begin{equation}\label{eqn: normalGrammian}
	$\mathcal{G}_{\mathcal{N}} = \sum_{p \in \mathcal{N}} h_p h_p^\intercal$
%\end{equation}
satisfies
\begin{equation}\label{eqn: resilienceCondition}
	\lambda_\min \left(\mathcal{G}_{\mathcal{N}} \right) > \Delta_{\mathcal{A}},
\end{equation}
where $\mathcal{N} = \mathcal{P}\setminus \mathcal{A}$ and
\begin{equation}\label{eqn: maxDisturbance}
	 \Delta_{\mathcal{A}} =  \max \limits_{v \in \mathbb{R}^{\left\lvert \mathcal{A} \right\rvert}, \left\lVert v \right \rVert_\infty \leq 1} \left\lVert \mathcal{H}_{\mathcal{A}}^\intercal v \right\rVert_2,
\end{equation}
then, under the algorithm described in Section~\ref{sect: algorithmDescription}, we have
\begin{equation}\label{eqn: localConsistency}
	\lim_{t \rightarrow \infty} \left(t+1 \right)^{\tau_0} \left\lVert x_n(t) - \theta^*_{\mathcal{I}_n} \right\rVert_2 = 0,
\end{equation}
for all agents $n$ and every $0 \leq \tau_0 < \tau_\gamma.$
\end{theorem}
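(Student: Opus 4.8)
The plan is to analyze the stacked error $\zeta_n(t) = x_n(t) - \theta^*_{\mathcal{I}_n}$ by separating a network-\emph{agreement} component from a \emph{disagreement} component, exploiting the mixed time-scale structure $0 < \tau_2 < \tau_1 < 1$: the consensus weight $\beta_t$ decays more slowly than the innovation weight $\alpha_t$, so disagreement among the agents is suppressed faster than their common estimate is steered toward $\theta^*$. I would first recast the update \eqref{eqn: estimateUpdate} in a global, component-wise form. Because the censoring maps \eqref{eqn: censorReceived}--\eqref{eqn: censorSelf} exchange information only on components that two neighbors both hold, for each field component $m$ the estimates kept by the agents in $\mathcal{J}_m$ evolve under a consensus term whose coupling is exactly the Laplacian of the induced subgraph $G_m$. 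Since every $G_m$ is connected, this Laplacian has one-dimensional kernel (the all-agree direction) and strictly positive second-smallest eigenvalue, so the stacked consensus operator $L_c$ (block diagonal across components, with the $G_m$ Laplacians as blocks) is positive semidefinite with kernel equal to the agreement subspace; the consensus drift $-\beta_t L_c \zeta(t)$ thus contracts the disagreement component. The error then obeys $\zeta(t+1) = \zeta(t) - \beta_t L_c \zeta(t) + \alpha_t U(t)$, where $U(t)$ collects the saturated innovations.

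Next I would extract the structure of $U(t)$ from the saturating gain. Using $\widetilde{\mathcal{I}}_n \subseteq \mathcal{I}_n$, for an uncompromised row $p$ of agent $n$ the residual in \eqref{eqn: smallK} equals exactly $-{h^c_p}^\intercal \zeta_n(t)$, whereas a compromised row carries an extra additive $a^{(p)}$. The gain \eqref{eqn: KnDef}--\eqref{eqn: smallK} saturates each residual so that $\bigl| k_p(t)(y^{(p)} - {h^c_p}^\intercal x_n(t)) \bigr| \le \gamma_t$; hence the total compromised contribution to the innovation is $\mathcal{H}_{\mathcal{A}}^\intercal w$ with $\|w\|_\infty \le \gamma_t$, whose $\ell_2$-norm is at most $\gamma_t \Delta_{\mathcal{A}}$ by the definition \eqref{eqn: maxDisturbance}. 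Conversely, once $|{h^c_p}^\intercal \zeta_n(t)| \le \gamma_t$ for every uncompromised $p$, those gains equal $1$, and after the agents' contributions are aggregated on the agreement subspace the uncompromised rows supply the exact linear pull $-\alpha_t \mathcal{G}_{\mathcal{N}} \bar\zeta(t)$, of strength at least $\alpha_t \lambda_{\min}(\mathcal{G}_{\mathcal{N}}) \|\bar\zeta(t)\|$.

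I would then combine these in a Lyapunov recursion for the agreement variable $\bar\zeta(t)$. In the regime where the uncompromised gains have saturated to one, the drift obeys $\|\bar\zeta(t+1)\| \le (1 - \alpha_t \lambda_{\min}(\mathcal{G}_{\mathcal{N}}))\|\bar\zeta(t)\| + \alpha_t \gamma_t \Delta_{\mathcal{A}}$, whose quasi-static equilibrium level is of order $\gamma_t \Delta_{\mathcal{A}} / \lambda_{\min}(\mathcal{G}_{\mathcal{N}})$. The resilience condition \eqref{eqn: resilienceCondition} is precisely what forces this level below $\gamma_t$, making the contraction \emph{self-consistent}: an error of order $\gamma_t$ keeps the uncompromised rows unsaturated and is therefore held at order $\gamma_t$. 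Since $\gamma_t = \Gamma(t+1)^{-\tau_\gamma}$ with $\tau_\gamma < \tau_1 - \tau_2$, matching this against the disagreement decay (governed by the faster scale $\beta_t$ and the positive $\lambda_2$ of each $G_m$) yields $\|\zeta_n(t)\| = o\bigl((t+1)^{-\tau_0}\bigr)$ for every $\tau_0 < \tau_\gamma$, which is \eqref{eqn: localConsistency}.

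I expect the main obstacle to be the bootstrapping around the saturation: the clean linear pull $-\mathcal{G}_{\mathcal{N}}\bar\zeta$ appears only after the error enters the region where every uncompromised gain equals one, yet reaching that region itself requires a contraction estimate. I would break this circularity by first proving uniform pathwise boundedness of $\{x_n(t)\}$ --- the saturation caps the per-step effect of every measurement, good or bad, so the innovation term in \eqref{eqn: estimateUpdate} is uniformly bounded and the iteration cannot diverge --- and then tracking the \emph{coupled} decay of $\gamma_t$ and $\|\zeta(t)\|$, using \eqref{eqn: resilienceCondition} to show that the two vanish together at the stated rate. Verifying that the per-agent uncompromised Grammians aggregate exactly to $\mathcal{G}_{\mathcal{N}}$ on the agreement subspace, even though each agent owns only a few rows of $\mathcal{H}$, is where the connectivity of the $G_m$ and the time-scale separation must be invoked with the most care.
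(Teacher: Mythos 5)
Your architecture coincides with the paper's (per-component agreement/disagreement split governed by the $G_m$ Laplacians, the saturation bound $\Delta_{\mathcal{A}}\gamma_t$ on the aggregated attack, contraction by $\mathcal{G}_{\mathcal{N}}$ once the clean gains reach $1$, and the final triangle-inequality assembly with bottleneck rate $\tau_\gamma$ against the faster disagreement rate $\tau_3 < \tau_\gamma + \tau_1 - \tau_2$), and you correctly identify the circularity around saturation as the crux --- but your proposed resolution does not close it. Uniform boundedness of the iterates is not the missing ingredient, and your argument for it is itself broken: the per-step innovation is $O(\alpha_t \gamma_t)$, but $\sum_t \alpha_t\gamma_t \sim \sum_t (t+1)^{-(\tau_1+\tau_\gamma)}$ need not converge (in the paper's own numerical example $\tau_1 + \tau_\gamma = 0.51$), so ``bounded increments'' do not preclude divergence. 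More importantly, nothing in your sketch forces the average error ever to enter the region $\lVert \overline{\mathbf{e}}_t \rVert_2 \lesssim \gamma_t$ where the clean gains equal $1$; a priori the error could hover above that scale forever with every clean gain strictly below $1$, and then the linear pull $-\alpha_t\mathcal{G}_{\mathcal{N}}$ in your Lyapunov recursion is simply unavailable. The paper closes exactly this case with a mechanism you would need to reproduce: the uniform gain lower bound $\widehat{K}_t = \bigl(\overline{\gamma}_t + X(t+1)^{-\tau_3}\bigr)/\bigl(\lVert \overline{\mathbf{e}}_t\rVert_2 + X(t+1)^{-\tau_3}\bigr) \leq k_p(t)$ for all uncompromised $p$, which converts the error dynamics into a state-dependent-gain recursion of the form $w_{t+1} = \bigl(1 - r_1(t)c_3/((\lvert w_t\rvert + c_5)(t+1)^{\delta_3})\bigr)w_t + r_1(t)c_4(t+1)^{-\delta_4}$ handled by a dedicated convergence lemma (Lemma 3 of the ChenSAGE reference). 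For the complementary case, the paper proves an invariance statement with the \emph{tightened} threshold $\overline{\gamma}_t = \gamma_t - X(t+1)^{-\tau_3}$ --- the slack is necessary because the residual tested by $k_p(t)$ involves the individual state $x_n(t)$, not the average, so the disagreement $\lVert \mathcal{Q}\widehat{\mathbf{x}}_t\rVert_2$ must be budgeted inside the threshold --- and verifies that the per-step contraction beats the shrinkage of the moving threshold, i.e.\ $\alpha_t \kappa \gtrsim \tau_\gamma/(t+1)$, which is where $\tau_1 < 1$ is used. Your ``self-consistency'' remark gestures at the invariance half but supplies neither the $\widehat{K}_t$ device nor the threshold-shrinkage check.

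A second, quieter gap: your recursion $\lVert\bar\zeta(t+1)\rVert \leq (1 - \alpha_t\lambda_{\min}(\mathcal{G}_{\mathcal{N}}))\lVert\bar\zeta(t)\rVert + \alpha_t\gamma_t\Delta_{\mathcal{A}}$ omits the weighting $\mathcal{D} = \diag(\lvert\mathcal{J}_1\rvert^{-1},\dots,\lvert\mathcal{J}_M\rvert^{-1})$ that the generalized average dynamics actually carry, and it treats the attack purely additively. With a plain triangle inequality the contraction is governed by spectral quantities of $\mathcal{D}\mathcal{G}_{\mathcal{N}}$ while the attack term is amplified by $\lVert\mathcal{D}\rVert_2$; when the $\lvert\mathcal{J}_m\rvert$ are nonuniform this comparison does \emph{not} reduce to the stated condition $\lambda_{\min}(\mathcal{G}_{\mathcal{N}}) > \Delta_{\mathcal{A}}$, so your argument would silently prove the theorem only under a strictly stronger, $\mathcal{D}$-dependent resilience condition. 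The paper avoids this via its matrix perturbation lemma (Lemma 4.2 of ChenSAFE): since $\lVert\overline{\mathbf{b}}_t\rVert_2 \leq \Delta_{\mathcal{A}}\lVert\overline{\mathbf{e}}_t\rVert_2$-type bounds hold on the relevant sphere, there exists $\mathcal{G}^*_t \succ 0$ with $\mathcal{G}^*_t\overline{\mathbf{e}}_t = \mathcal{G}_{\mathcal{N}}\overline{\mathbf{e}}_t + \overline{\mathbf{b}}_t$ and $\lambda_{\min}(\mathcal{G}^*_t) \geq \kappa = \lambda_{\min}(\mathcal{G}_{\mathcal{N}}) - \Delta_{\mathcal{A}}$, so the attack is absorbed \emph{before} $\mathcal{D}$ is applied and the sharp threshold survives. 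Your consensus half (the analogue of the paper's Lemma 1, via connectivity of each $G_m$ and the $\gamma_t$-saturated innovation feeding a standard time-varying recursion) and the final rate bookkeeping are sound; the proof fails only, but genuinely, at the two points above.
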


Theorem~\ref{thm: main} states that, as long as the resilience condition in~\eqref{eqn: resilienceCondition} is satisfied, the agents' local states converge to the true values of the field components they are interested in recovering. The sufficient resilience condition~\eqref{eqn: resilienceCondition} is a condition on the redundancy of uncompromised measurements and intuitively means that the uncompromised measurements (across all agents) should collectively measure $\theta^*$ redundantly enough to overcome the influence of the compromised measurements. The resilience of our field recovery algorithm depends on the redundancy of the measurements and not on the agents' interest sets. In addition to the resilience condition~\eqref{eqn: resilienceCondition}, we also have requirements on the topology of the communication network $G$. Recall, from Section~\ref{sect: background}, that, for each component $m = 1, \dots, M$, we require $G_m$, the subgraph of the communication network $G$ induced by the agents interested in recovering component $m$, to be connected. %This resilience condition is the same as the resilience condition established for distributed \textit{parameter} estimation (where each agent estimates the entire parameter $\theta^*$) under measurement attacks provided in~\cite{ChenSAGE}. %

\section{Performance Analysis}\label{sect: analysis}
This section analyzes the performance of our resilient distributed field recovery algorithm. Due to space limitations, we outline the main steps, presented as intermediate results, to prove Theorem~\ref{thm: main}, and we provide a detailed analysis in the appendix. The performance anlaysis follows three main steps. First, we transform each agent's state into an auxiliary state. Second, we show that all of the agents' auxiliary states converges to a generalized network average state. Finally, we show that the generalized network average state converges to the true value of the field.

For each agent $n$, we define the auxiliary state $\widetilde{x}_n(t) \in \mathbb{R}^M$ (recall that $M$ is the dimension of the field $\theta^*$) component-wise as
\begin{equation}\label{eqn: tildeX}
	\left[ \widetilde{x}_n(t) \right]_i = \left\{ \begin{array}{ll} \left[ x_n(t) \right]_{\mathcal{I}^{-1}_n \left( i \right)}, & \text{if } i \in \mathcal{I}_n, \\ 0, &\text{otherwise,} \end{array} \right.
\end{equation}
for each $i =1, \dots, M$. Let $\mathbf{\widetilde{x}}_t = \left[ \begin{array}{ccc} \widetilde{x}_1(t)^\intercal & \cdots & \widetilde{x}_N(t)^\intercal \end{array}\right]^\intercal$ stack the auxiliary states of all the agents. We may show that $\widetilde{\mathbf{x}}_t$ evolves according to
\begin{equation}\label{eqn: stackedTildeX}
\begin{split}
	\widetilde{\mathbf{x}}_{t+1} &=\widetilde{\mathbf{x}}_t - \beta_t \mathbf{L} \widetilde{\mathbf{x}}_t + \alpha_t D_H^\intercal \mathbf{K}_t \left(\mathbf{y} - D_H \widetilde{\mathbf{x}}_t \right),
\end{split} 
\end{equation}
where $\mathbf{K}_t =  \blkdiag\left(K_1(t), \dots, K_N(t) \right),$ $D_H = \blkdiag (H_1, \allowbreak \dots, H_N )$, and the $NM \times NM$ matrix $\mathbf{L}$ is defined blockwise as follows. Let $\left[\mathbf{L}\right]_{n, l} \in \mathbb{R}^{M \times M}$ be the $(n, l)$-th \textit{sub-block} of $\mathbf{L}$, for $n, l = 1, \dots, N$, which is defined as
\begin{equation}\label{eqn: ltDef}
	\left[\mathbf{L}\right]_{n, l} = \left\{ \begin{array}{ll}-Q_n \sum_{i = 1:i \neq n}^N \left[L\right]_{n, i} Q_i, & \text{if } n = l, \\ \left[L \right]_{n, l} Q_n Q_l, & \text{otherwise,}\end{array} \right.
\end{equation}
where, for each agent $n$, the matrix $Q_n$ is an $M \times M$ diagonal matrix where the $m^{\text{th}}$ diagonal element (for $m = 1, \dots M$) is $1$ if agent $n$ is interested in recovering $\left[\theta^*\right]_m$ (i.e., $m \in \mathcal{I}_n$) and $0$ otherwise.
In~\eqref{eqn: ltDef}, the term $\left[ L \right]_{n, l}$ refers to the $(n, l)$-th (scalar) \textit{element} of the Laplacian $L$. We now use the auxiliary state update~\eqref{eqn: stackedTildeX} to analyze the performance of the distribute field recovery algorithm.

Let 
%\begin{equation}\label{eqn: generalizedAverage}
	$\overline{\mathbf{x}}_t = \mathcal{D} \left( \mathbf{1}_N^\intercal \otimes I_M \right)\widetilde{\mathbf{x}}_t$,
%\end{equation}
where $\mathcal{D} = \diag(\left\lvert{\mathcal{J}_1}\right\rvert^{-1}, \dots, \allowbreak \left\lvert{\mathcal{J}_M}\right\rvert^{-1})$, be the \textit{generalized network average state}. Each component $m = 1, \dots, M$ of $\overline{\mathbf{x}}_t$ is the average estimate of $\left[\theta^*\right]_m$ taken over all agents interested in recovering that specific component (i.e., all agents $n \in \mathcal{J}_m$). Define the matrix $\mathcal{Q} = \blkdiag\left(Q_1, \dots, Q_N \right)$. Then, we may show the following result:
\begin{lemma}\label{lem: resilientConsensus}
	Under the algorithm from section~\ref{sect: algorithmDescription},
\begin{equation}\label{eqn: resilientConsensus}
	 \lim_{t \rightarrow \infty} \left(t+1\right)^{\tau_3} \left\lVert \mathcal{Q} \left(\widetilde{\mathbf{x}}_t - \left(\mathbf{1}_N \otimes I_M \right)\overline{\mathbf{x}}_t \right) \right\rVert_2 = 0,
\end{equation}
for every $0 \leq \tau_3 < \tau_\gamma + \tau_1 - \tau_2$. 
\end{lemma}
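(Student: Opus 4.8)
The plan is to read \eqref{eqn: resilientConsensus} as a rate-of-consensus bound for the masked disagreement vector $\mathbf{z}_t := \mathcal{Q}\left(\widetilde{\mathbf{x}}_t - (\mathbf{1}_N\otimes I_M)\overline{\mathbf{x}}_t\right)$. First I would establish that $\mathbf{z}_t$ obeys an affine recursion $\mathbf{z}_{t+1} = (I - \beta_t\mathbf{L})\mathbf{z}_t + \mathbf{v}_t$, then show that the homogeneous operator contracts $\mathbf{z}_t$ while the forcing term $\mathbf{v}_t$ is uniformly $O(\alpha_t\gamma_t)$, and finally feed the resulting scalar inequality into a standard deterministic recursion lemma to read off the decay rate $\tau_1+\tau_\gamma-\tau_2$.

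The technical core, and the step I expect to be the main obstacle, is extracting the spectral structure of $\mathbf{L}$ from its block definition \eqref{eqn: ltDef}. Using $Q_n^2 = Q_n$ and the diagonality of the $Q_n$, I would show that $\mathbf{L} = \mathbf{L}^\intercal\succeq 0$ and that $\mathbf{L}$ decomposes, component by component, into the Laplacians $L_{G_m}$ of the interest-induced subgraphs: acting on a stacked $\mathbf{w} = (w_1,\dots,w_N)$, the $m$-th entry of its $n$-th block equals $[Q_n]_{m,m}\sum_{l\in\Omega_n}[Q_l]_{m,m}([w_n]_m-[w_l]_m)$, i.e.\ a row of $L_{G_m}$ applied to the $m$-components of the agents in $\mathcal{J}_m$. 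From this I would read off the three identities that drive the argument: $\mathbf{L}(\mathbf{1}_N\otimes I_M) = 0$ (every agent carries the same block, so all differences vanish), $(\mathbf{1}_N^\intercal\otimes I_M)\mathbf{L} = 0$ (zero Laplacian row sums), and $\mathbf{L}\mathcal{Q} = \mathcal{Q}\mathbf{L} = \mathbf{L}$. The first two, combined with \eqref{eqn: stackedTildeX}, give $\overline{\mathbf{x}}_{t+1} = \overline{\mathbf{x}}_t + \mathcal{D}(\mathbf{1}_N^\intercal\otimes I_M)\mathbf{u}_t$, where $\mathbf{u}_t := \alpha_t D_H^\intercal\mathbf{K}_t(\mathbf{y}-D_H\widetilde{\mathbf{x}}_t)$; the first together with the $\mathcal{Q}$-identity gives $\mathbf{L}\widetilde{\mathbf{x}}_t = \mathbf{L}\mathbf{z}_t$. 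Substituting these into the definition of $\mathbf{z}_{t+1}$, and using $\mathcal{Q}\widetilde{\mathbf{x}}_t = \widetilde{\mathbf{x}}_t$ and $\mathcal{Q}\mathbf{u}_t = \mathbf{u}_t$, yields the claimed recursion with $\mathbf{v}_t = \left[I - \mathcal{Q}(\mathbf{1}_N\otimes I_M)\mathcal{D}(\mathbf{1}_N^\intercal\otimes I_M)\right]\mathbf{u}_t$.

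For the contraction I would verify that $\mathbf{z}_t$ stays in the subspace $\mathcal{S} := \{\mathbf{w} : \mathcal{Q}\mathbf{w}=\mathbf{w},\ (\mathbf{1}_N^\intercal\otimes I_M)\mathbf{w}=0\}$; the computation $(\mathbf{1}_N^\intercal\otimes I_M)\mathcal{Q}(\mathbf{1}_N\otimes I_M) = \sum_n Q_n = \mathcal{D}^{-1}$ shows $(\mathbf{1}_N^\intercal\otimes I_M)\mathbf{z}_t = 0$, and the identities above show $\mathbf{L}$ preserves $\mathcal{S}$. On $\mathcal{S}$ each component-$m$ block lies in $\mathbf{1}_{|\mathcal{J}_m|}^\perp$, where, because every $G_m$ is connected, $L_{G_m}$ has smallest nonzero eigenvalue $\lambda_2(L_{G_m})>0$; setting $c := \min_m \lambda_2(L_{G_m})>0$ and using $\beta_t\to 0$, for all large $t$ the symmetric operator $I-\beta_t\mathbf{L}$ restricted to $\mathcal{S}$ has norm at most $1-c\beta_t$, so $\left\lVert(I-\beta_t\mathbf{L})\mathbf{z}_t\right\rVert_2 \le (1-c\beta_t)\left\lVert\mathbf{z}_t\right\rVert_2$.

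The forcing term is where the saturation gain pays off: by \eqref{eqn: smallK}, $k_p(t)\,\lvert y^{(p)}-{h_p^c}^\intercal x_n(t)\rvert\le\gamma_t$ for every $p$, so $\left\lVert\mathbf{K}_t(\mathbf{y}-D_H\widetilde{\mathbf{x}}_t)\right\rVert_\infty\le\gamma_t$ regardless of the states or of the attack, whence $\left\lVert\mathbf{u}_t\right\rVert_2\le C_0\alpha_t\gamma_t$ and $\left\lVert\mathbf{v}_t\right\rVert_2\le C_1\alpha_t\gamma_t$ with fixed constants. This state-independent bound is precisely why Lemma~\ref{lem: resilientConsensus} requires no resilience hypothesis. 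Combining, $\left\lVert\mathbf{z}_{t+1}\right\rVert_2\le(1-c\beta_t)\left\lVert\mathbf{z}_t\right\rVert_2 + C_1\alpha_t\gamma_t$ for all large $t$; since $\beta_t\sim(t+1)^{-\tau_2}$ with $\tau_2<1$ and $\alpha_t\gamma_t\sim(t+1)^{-(\tau_1+\tau_\gamma)}$ with $\tau_1+\tau_\gamma>\tau_2$, a standard deterministic recursion lemma gives $\left\lVert\mathbf{z}_t\right\rVert_2 = O\!\left((t+1)^{-(\tau_1+\tau_\gamma-\tau_2)}\right)$, and therefore $(t+1)^{\tau_3}\left\lVert\mathbf{z}_t\right\rVert_2\to 0$ for every $\tau_3<\tau_\gamma+\tau_1-\tau_2$, which is \eqref{eqn: resilientConsensus}. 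The crux of the work is the structural lemma on $\mathbf{L}$ and the uniform spectral gap; the norm bounds and the scalar recursion are routine.
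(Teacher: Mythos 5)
Your proposal is correct and takes essentially the same route as the paper's proof: both arguments rest on the component-wise decomposition of $\mathbf{L}$ into the Laplacians $L_m$ of the induced subgraphs $G_m$ (with $\lambda_2(L_m) > 0$ by the connectivity assumption), the state- and attack-independent saturation bound $\left\lVert \mathbf{K}_t \left( \mathbf{y} - D_H \widetilde{\mathbf{x}}_t \right) \right\rVert_\infty \leq \gamma_t$ giving an $O(\alpha_t \gamma_t)$ forcing term, and the same time-varying recursion lemma (Lemma~\ref{lem: timeVaryingConvergence}) with $\delta_1 = \tau_2$ and $\delta_2 = \tau_1 + \tau_\gamma$. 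The only difference is bookkeeping: the paper projects per component via the selector matrices $\mathcal{Q}_m$ and runs $M$ separate recursions $\left\lVert \widehat{\mathbf{x}}_{t+1}^m \right\rVert_2 \leq \left(1 - \beta_t \lambda_2(L_m)\right) \left\lVert \widehat{\mathbf{x}}_t^m \right\rVert_2 + C_1 \alpha_t \gamma_t$, summing by the triangle inequality, whereas you run a single recursion for the stacked disagreement on the invariant subspace $\mathcal{S}$ with the uniform gap $c = \min_m \lambda_2(L_m)$ --- an equivalent aggregation of the same estimates.
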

\noindent Lemma~\ref{lem: resilientConsensus} states that, under the our algorithm, the auxiliary state of each agent $n = 1, \dots, N$, $\widetilde{x}_n(t)$ converges to the generalized network average state $\overline{\mathbf{x}}_t$ on all components the interest set $\mathcal{I}_n$.

We now study the behavior of the generalized network average state $\overline{\mathbf{x}}_t$. We are interested in 
%\begin{equation}\label{eqn: avgError}
	$\overline{\mathbf{e}}_t = \overline{\mathbf{x}}_t - \theta^*,$
%\end{equation}
the generalized network average state error. We may show the following result:
\begin{lemma}\label{lem: avg2}
	Under the algorithm from section~\ref{sect: algorithmDescription},  as long as $\lambda_{\min}\left(\mathcal{G}_{\mathcal{N}} \right) > \Delta_{\mathcal{A}},$ then
\begin{equation}\label{eqn: averageConv}
	\lim_{t \rightarrow \infty} (t+1)^{\tau_0} \left\lVert \overline{\mathbf{e}}_t \right \rVert_2 = 0,
\end{equation}
for every $0 \leq \tau_0 < \tau_\gamma$.  
\end{lemma}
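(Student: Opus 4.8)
The plan is to push the stacked auxiliary recursion~\eqref{eqn: stackedTildeX} through the averaging operator $\mathcal{D}(\mathbf{1}_N^\intercal \otimes I_M)$ and to show that the generalized Laplacian term disappears. The crucial structural fact is $(\mathbf{1}_N^\intercal \otimes I_M)\mathbf{L} = 0$: summing the blocks in~\eqref{eqn: ltDef} over the block-row index and using the symmetry of $L$ together with the fact that the diagonal matrices $Q_n$ commute, the diagonal and off-diagonal contributions cancel column-block by column-block. Applying $\mathcal{D}(\mathbf{1}_N^\intercal \otimes I_M)$ to~\eqref{eqn: stackedTildeX} and subtracting the constant $\theta^*$ therefore yields a recursion of the form $\overline{\mathbf{e}}_{t+1} = \overline{\mathbf{e}}_t + \alpha_t \mathcal{D}\sum_{p=1}^{P} h_p\, s_p(t)$, where $s_p(t) = k_p(t)\big(y^{(p)} - h_p^\intercal \widetilde{x}_n(t)\big)$ is the saturated innovation of the (unique) agent $n$ owning index $p$, and I have used $H_n^c x_n(t) = H_n\widetilde{x}_n(t)$ and $D_H^\intercal$ to reassemble the per-agent gradients into the global sum. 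Writing $y^{(p)} = h_p^\intercal\theta^* + a^{(p)}$ and inserting $\widetilde{x}_n(t) - \theta^* = Q_n(\widetilde{x}_n(t) - \overline{\mathbf{x}}_t) + Q_n\overline{\mathbf{e}}_t$ (legitimate because $h_p = Q_n h_p$ for $p$ owned by $n$, since $\widetilde{\mathcal{I}}_n\subseteq\mathcal{I}_n$) splits $s_p(t)$ into an uncompromised restoring part, a compromised part, and a consensus-error part.

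Next I would bound the three parts. Because $|s_p(t)| \le \gamma_t$ for every $p$ by the saturation~\eqref{eqn: smallK}, the compromised contribution $\sum_{p\in\mathcal{A}} h_p s_p(t)$ equals $\mathcal{H}_\mathcal{A}^\intercal v$ for some $v$ with $\|v\|_\infty \le \gamma_t$, so by positive homogeneity of~\eqref{eqn: maxDisturbance} its Euclidean norm is at most $\gamma_t\Delta_\mathcal{A}$; the same duality gives $\sum_{p\in\mathcal{A}}|h_p^\intercal \overline{\mathbf{e}}_t| \le \Delta_\mathcal{A}\|\overline{\mathbf{e}}_t\|_2$. The consensus-error part is governed by $\mathcal{Q}(\widetilde{\mathbf{x}}_t - (\mathbf{1}_N\otimes I_M)\overline{\mathbf{x}}_t)$, which by Lemma~\ref{lem: resilientConsensus} vanishes at rate $o\big((t+1)^{-\tau_3}\big)$ for any $\tau_3 < \tau_\gamma + \tau_1 - \tau_2$; since $\tau_\gamma < \tau_1 - \tau_2$, this decays strictly faster than $\gamma_t$ and is a negligible higher-order perturbation. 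Finally, ignoring the consensus part, the uncompromised contribution is the saturated gradient $\sum_{p\in\mathcal{N}} h_p\, k_p(t)(-h_p^\intercal\overline{\mathbf{e}}_t)$, whose inner product with $\overline{\mathbf{e}}_t$ equals $-\sum_{p\in\mathcal{N}}\min\big((h_p^\intercal\overline{\mathbf{e}}_t)^2,\, \gamma_t|h_p^\intercal\overline{\mathbf{e}}_t|\big)\le 0$; when $\|\overline{\mathbf{e}}_t\|_2 \le \gamma_t$ every uncompromised innovation is unsaturated ($k_p(t)=1$, since $|h_p^\intercal\overline{\mathbf{e}}_t|\le\|\overline{\mathbf{e}}_t\|_2\le\gamma_t$) and this reduces to $-\overline{\mathbf{e}}_t^\intercal\mathcal{G}_\mathcal{N}\overline{\mathbf{e}}_t \le -\lambda_{\min}(\mathcal{G}_\mathcal{N})\|\overline{\mathbf{e}}_t\|_2^2$.

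With these bounds I would run a Lyapunov/comparison argument on $V_t = \overline{\mathbf{e}}_t^\intercal\mathcal{D}^{-1}\overline{\mathbf{e}}_t$, whose weighting $\mathcal{D}^{-1}$ exactly cancels the $\mathcal{D}$ in the recursion and gives $V_{t+1} = V_t + 2\alpha_t\overline{\mathbf{e}}_t^\intercal \sum_p h_p s_p(t) + \alpha_t^2\big\|\sum_p h_p s_p(t)\big\|_{\mathcal{D}}^2$. Collecting the pieces produces a one-step inequality in which the uncompromised term supplies the restoring force, the compromised term contributes at most $2\alpha_t\gamma_t\Delta_\mathcal{A}\|\overline{\mathbf{e}}_t\|_2$, the consensus term is $o(\alpha_t\gamma_t\|\overline{\mathbf{e}}_t\|_2)$, and the quadratic step term is $O(\alpha_t^2\gamma_t^2)$; the resilience condition $\lambda_{\min}(\mathcal{G}_\mathcal{N}) > \Delta_\mathcal{A}$ is exactly what makes the net drift negative and forces the error down to an $O(\gamma_t)$ floor. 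Feeding this inequality into a deterministic decay lemma (of the type established in the appendix for recursions driven by $\alpha_t$ against a threshold $\gamma_t$) then yields $(t+1)^{\tau_0}\|\overline{\mathbf{e}}_t\|_2 \to 0$ for every $\tau_0 < \tau_\gamma$, as claimed.

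The hard part will be handling the saturation nonlinearity: the restoring gains $k_p(t)$ are state dependent, so outside the regime $\|\overline{\mathbf{e}}_t\|_2 \le \gamma_t$ the uncompromised term is only $-\gamma_t\sum_{p\in\mathcal{N}}|h_p^\intercal\overline{\mathbf{e}}_t|$ rather than the clean quadratic $-\overline{\mathbf{e}}_t^\intercal\mathcal{G}_\mathcal{N}\overline{\mathbf{e}}_t$, and one must argue that $\lambda_{\min}(\mathcal{G}_\mathcal{N}) > \Delta_\mathcal{A}$ still guarantees net contraction there, so that the error is first driven into the unsaturated regime, after which the shrinking threshold $\gamma_t$ pulls the floor to zero. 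Carefully matching the two decay rates --- the error-decay rate against the threshold-decay rate $\tau_\gamma$, while ensuring the consensus perturbation from Lemma~\ref{lem: resilientConsensus} stays higher order via $\tau_\gamma < \tau_1 - \tau_2$ --- is where the bookkeeping concentrates, and is the reason the attainable rate saturates exactly at $\tau_0 < \tau_\gamma$.
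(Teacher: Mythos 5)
Your proposal is correct in outline and, at the structural level, it is the paper's proof in a different wrapper: the same cancellation $(\mathbf{1}_N^\intercal \otimes I_M)\mathbf{L} = 0$ yielding the Laplacian-free averaged recursion \eqref{eqn: overlineX}; the same three-way split of the saturated innovation into restoring, attack, and consensus parts (the paper's \eqref{eqn: averageErrorDynamics} with $\mathbf{b}_t$ and the duality bound $\left\lVert \mathbf{b}_t\right\rVert_2 \leq \Delta_{\mathcal{A}}\gamma_t$); the same use of Lemma~\ref{lem: resilientConsensus} to make the consensus part a higher-order perturbation via $\tau_3 > \tau_\gamma$; and the same unsaturated-vs-saturated dichotomy resolved by an appendix decay lemma --- your unnamed ``deterministic decay lemma'' is exactly Lemma~\ref{lem: timeVaryingSystem1}. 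Where you differ is packaging: you run a quadratic drift on $V_t = \overline{\mathbf{e}}_t^\intercal \mathcal{D}^{-1}\overline{\mathbf{e}}_t$ with inner-product bounds, whereas the paper runs a norm recursion on $\left\lVert\overline{\mathbf{e}}_t\right\rVert_2$ and absorbs the attack into a positive definite matrix via the perturbation Lemma~\ref{lem: main}, getting the effective eigenvalue $\kappa = \lambda_{\min}(\mathcal{G}_{\mathcal{N}}) - \Delta_{\mathcal{A}}$; and your ``drive in, then track the floor'' two-phase plan corresponds to the paper's dichotomy in Lemma~\ref{lem: avg1} (the tube $\left\lVert\overline{\mathbf{e}}_t\right\rVert_2 \leq \overline{\gamma}_t$ is forward-invariant; the floor-tracking condition you call bookkeeping is made precise in \eqref{eqn: aux11}, which holds because $\tau_1 < 1$) together with the never-enter case, where the gain bound $\widehat{K}_t$ in \eqref{eqn: avgConv2} feeds Lemma~\ref{lem: timeVaryingSystem1} and gives the even faster rate $\tau_1 - \tau_2 - \epsilon_X > \tau_\gamma$.

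Two spots you left open, both closable. First, a genuine (if small) slip: $\left\lVert\overline{\mathbf{e}}_t\right\rVert_2 \leq \gamma_t$ does \emph{not} force $k_p(t) = 1$ for $p \in \mathcal{N}$, because the innovation is $-h_p^\intercal\left(\widetilde{x}_n(t) - \theta^*\right)$ and contains the consensus error: $\left\lvert y^{(p)} - {h_p^c}^\intercal x_n(t)\right\rvert \leq \left\lVert\overline{\mathbf{e}}_t\right\rVert_2 + \left\lVert\mathcal{Q}\widehat{\mathbf{x}}_t\right\rVert_2$. Unsaturation is guaranteed only on the shrunken tube $\left\lVert\overline{\mathbf{e}}_t\right\rVert_2 \leq \overline{\gamma}_t = \gamma_t - X(t+1)^{-\tau_3}$, which is precisely why the paper introduces the auxiliary threshold \eqref{eqn: lineGammaDef}; since $\tau_3 > \tau_\gamma$ the correction is higher order, so your rate conclusions survive the patch. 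Second, the saturated-regime contraction you flag as the hard part does hold, for the reason you suspect, and the unit-row-norm assumption is what makes it work: $\left\lvert h_p^\intercal \overline{\mathbf{e}}_t\right\rvert \leq \left\lVert\overline{\mathbf{e}}_t\right\rVert_2$, so $\sum_{p \in \mathcal{N}} \left\lvert h_p^\intercal \overline{\mathbf{e}}_t\right\rvert \geq \overline{\mathbf{e}}_t^\intercal \mathcal{G}_{\mathcal{N}} \overline{\mathbf{e}}_t / \left\lVert\overline{\mathbf{e}}_t\right\rVert_2 \geq \lambda_{\min}(\mathcal{G}_{\mathcal{N}})\left\lVert\overline{\mathbf{e}}_t\right\rVert_2 > \Delta_{\mathcal{A}}\left\lVert\overline{\mathbf{e}}_t\right\rVert_2$, beating the attack drift. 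For the mixed regime, use $\min\left(x^2, \gamma_t x\right) \geq \frac{\gamma_t}{\gamma_t + \left\lVert\overline{\mathbf{e}}_t\right\rVert_2}\, x^2$ for $0 \leq x \leq \left\lVert\overline{\mathbf{e}}_t\right\rVert_2$: this converts your drift into one with state-dependent gain proportional to $\gamma_t / \left(\left\lVert\overline{\mathbf{e}}_t\right\rVert_2 + \text{const}\right)$, which is exactly the paper's $\widehat{K}_t$ and exactly the hypothesis $c_3 / \left(\left(\left\lvert w_t\right\rvert + c_5\right)(t+1)^{\delta_3}\right)$ of Lemma~\ref{lem: timeVaryingSystem1} with $\delta_3 = \tau_\gamma$. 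With these two patches your plan is sound and essentially reproduces the appendix argument.
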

\noindent Lemma~\ref{lem: avg2} states that, under the resilience condition  $\lambda_{\min}\left(\mathcal{G}_{\mathcal{N}} \right) > \Delta_{\mathcal{A}}$~\eqref{eqn: resilienceCondition}, the generalized network average state converges to the true value of the field $\theta^*$.

Using Lemmas~\ref{lem: resilientConsensus} and~\ref{lem: avg2}, we provide a proof sketch of Theorem~\ref{thm: main}. From the triangle inequality, we have \begin{equation}\label{eqn: mainTriangle}\left\lVert x_n(t) - \theta^*_{\mathcal{I}_n} \right\rVert_2 \leq \left\lVert \mathcal{Q} \widehat{\mathbf{x}}_t \right\rVert_2 + \left\lVert \overline{\mathbf{e}}_t \right\rVert_2.\end{equation} Then, from Lemmas~\ref{lem: resilientConsensus} and~\ref{lem: avg2}, under the resilience condition~\eqref{eqn: resilienceCondition}, we have
%\begin{align}
	 $\lim_{t \rightarrow \infty} \left(t+1\right)^{\tau_3} \left\lVert \mathcal{Q} \widehat{\mathbf{x}}_t \right\rVert_2 = 0$, and %\label{eqn: main1}\\
	 $\lim_{t \rightarrow \infty} (t+1)^{\tau_0} \left\lVert \overline{\mathbf{e}}_t \right \rVert_2 = 0 $ %\label{eqn: main2}
%\end{align}
for every $0 \leq \tau_3 < \tau_\gamma + \tau_1 - \tau_2$ and every $0 \leq \tau_0 < \tau_\gamma$. Substituting the above relationships into~\eqref{eqn: mainTriangle} yields the desired result~\eqref{eqn: localConsistency}. 

\section{Numerical Examples}\label{sect: examples}
We consider a mesh network of $N = 400$ robots agents sensing an unknown environment, modeled by a two dimensional 230 unit by 230 unit grid. We assign each grid square a $[0, 255]$-valued state variable that represents the safety or occupancy of that particular location. For example, a state value of $0$ may represent a location that is completely free of obstacles, and a state value of $255$ may represent an impassable obstacle. The field parameter $\theta^*$ is the collection of the $52,900$ state values from each location. Figure~\ref{fig: network} visualizes the true field: the $x$ and $y$ axis represent location coordinates and the $z$ axis ($\theta^*$ axis) gives the state value of each $(x, y)$ coordinate location. 
\begin{figure}[h!]
	\centering
	\includegraphics[width = 0.7\columnwidth]{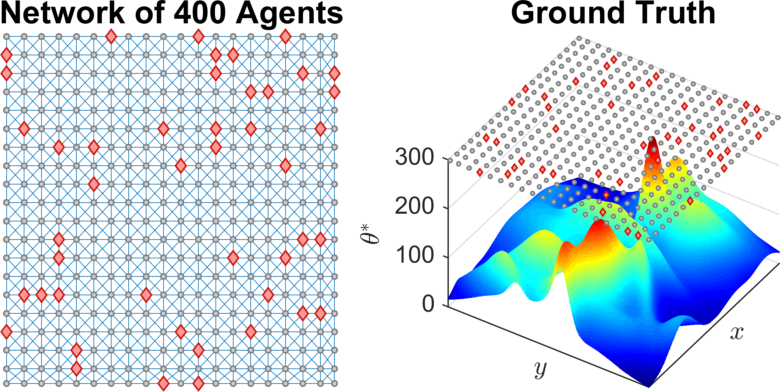}
	\caption{A mesh network of $400$ agents (left) placed in a $230 \times 230$ two dimensional grid environment (right). %The field $\theta^*$ is the collection state values for each location in the grid environment. 
Agents represented by red diamonds have compromised measurements.}\label{fig: network}
\end{figure}

Each agent measures the state values of all locations in a $37 \times 37$ unit square subgrid centered at its location and is interested in recovering the state values of all locations in a $73 \times 73$ unit subgrid centered at its location. An adversary attacks $45$ of the agents and changes all (scalar) measurements of each agent under attack to $y^{(p)} = 255$. We compare the performance of our field recovery algorithm against the algorithm from~\cite{SahuRandomFields}, \textbf{CIRFE}, which does not account for measurement attacks, using the following hyperparameters: $a = 1, b = 0.084, \tau_1 = 0.26, \tau_2 = 0.001, \Gamma = 40, \tau_\gamma = 0.25.$

Figure~\ref{fig: performance1} shows that the measurement attack induces a persistent local recovery error under~\textbf{CIRFE}, while, under our resilient algorithm, each agent's recovery error converges to $0$ despite the attack.
\begin{figure}[h!]
	\centering
	\includegraphics[width = 0.85\columnwidth]{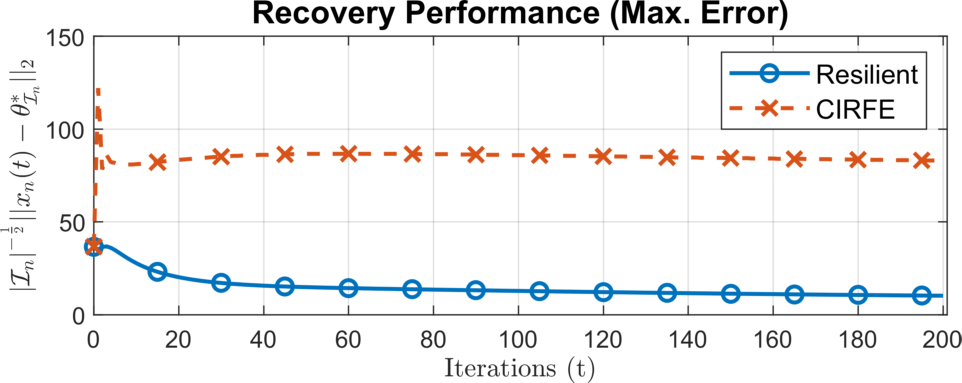}
	\caption{Evolution of the maximum (across all agents) root mean square error (RMSE) of recovery, normalized by the square root of the size of each agent's interest set.}\label{fig: performance1}
\end{figure}
 Figure~\ref{fig: performance2} shows the recovery results of our resilient algorithm and \textbf{CIRFE} after $200$ iterations, where, at each location, we report the \textit{worst} recovery result (highest RMSE) over all interested agents. 
\begin{figure}[h!]
	\centering
	\includegraphics[width = 0.85\columnwidth]{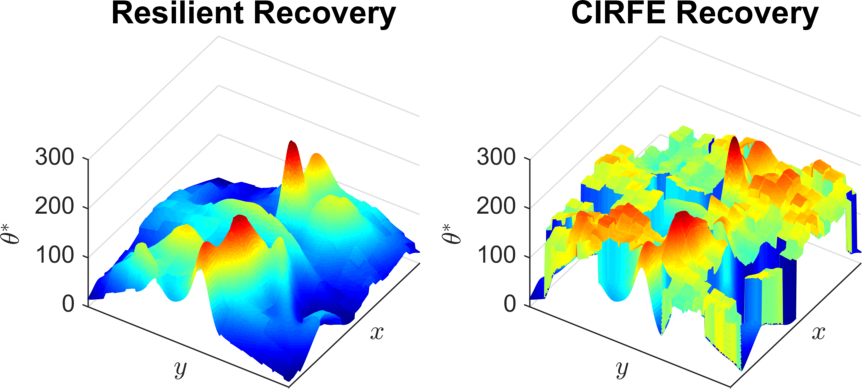}
	\caption{Recovery results from our algorithm (left) and \textbf{CIRFE}~\cite{SahuRandomFields} (right). For each location, we report the worst recovered value (highest RMSE) over all interested agents after $200$ iterations.}\label{fig: performance2}
\end{figure}
Under our algorithm, all agents resiliently recover the components of the field in which they are interested, while under~\textbf{CIRFE}, the same measurement attack prevents the agents from accurately recovering the field. 

\section{Conclusion}\label{sect: conclusion}
In this paper, we presented an algorithm for resilient distributed field recovery. A network of devices or agents measures a large, spatially distributed field. An adversary compromises a subset of the measurements, arbitrarily changing their values. Each agent processes its (possibly altered) measurements and information from its neighbors to recover certain components of the field. We presented a distributed, \textit{consensus+innovations} type algorithm for resilient field recovery. %Each agent maintains and updates a local state based on its neighbors' states and its local measurements. When updating its state, each agent applies an adaptive gain to its local innovation (the difference between its observed measurement and the predicted measurement from its state) to saturate its component-wise magnitude along a decaying threshold. %We showed that our algorithm is resilient to measurement attacks: 
As long as there is enough redundancy among the uncompromised measurements in measuring the field, then, our algorithm guarantees that all of the agents' local states converge to the true values of the field components that they are interested in recovering. Finally, we illustrated the performance of our algorithm through numerical examples. 

\bibliographystyle{IEEEbib}
\bibliography{IEEEabrv,References}
\appendix
\section{Proof of Lemmas 1 and 2}
We now provide the proofs of Lemmas~\ref{lem: resilientConsensus} and~\ref{lem: avg2}. 

\subsection{Intermediate Results}
We first present intermediate results from~\cite{Kar2, ChenSAGE2, ChenSAFE}. First, the following Lemma from~\cite{Kar1} characterizes the behavior of scalar time-varying systems of the form
\begin{equation}\label{eqn: timeVaryingSystem}
	w_{t+1} = \left( 1 - r_1(t) \right) w_t + r_2(t),
\end{equation}
where $r_1(t) = \frac{c_1}{(t+1)^{\delta_1}}, r_2(t) = \frac{c_2}{(t+1)^{\delta_2}},$
$c_1, c_2 > 0$, and $0 < \delta_1 < \delta_2 < 1$. 
\begin{lemma}[Lemma 5 in~\cite{Kar1}]\label{lem: timeVaryingConvergence}
	The system in~\eqref{eqn: timeVaryingSystem} satisfies
\begin{equation}
	\lim_{t \rightarrow \infty} \left(t+1\right)^{\delta_0} w_{t+1} = 0,
\end{equation}
for every $0 \leq \delta_0 < \delta_2 - \delta_1.$
\end{lemma}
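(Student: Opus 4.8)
The plan is to solve the linear recursion~\eqref{eqn: timeVaryingSystem} in closed form and then control the two resulting pieces. Unrolling the iteration gives
\[
w_{t+1} = \Big(\prod_{s=0}^{t}\big(1 - r_1(s)\big)\Big) w_0 + \sum_{k=0}^{t}\Big(\prod_{s=k+1}^{t}\big(1 - r_1(s)\big)\Big) r_2(k),
\]
a homogeneous term driven by the initial condition plus a convolution of the forcing $r_2$ against the transition products. Using $1 - x \le e^{-x}$ together with the integral estimate $\sum_{s=k+1}^{t}(s+1)^{-\delta_1} \gtrsim \tfrac{1}{1-\delta_1}\big((t+1)^{1-\delta_1} - (k+1)^{1-\delta_1}\big)$, valid since $\delta_1 < 1$, one sees that the homogeneous term decays like $\exp\!\big(-\tfrac{c_1}{1-\delta_1}(t+1)^{1-\delta_1}\big)$, i.e.\ faster than any polynomial, so it is harmless after multiplication by $(t+1)^{\delta_0}$. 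The heuristic for the forcing sum is the quasi-stationary balance $r_1(t) w_t \approx r_2(t)$, which predicts $w_t \approx \tfrac{c_2}{c_1}(t+1)^{\delta_1 - \delta_2}$; this is exactly the critical rate $(t+1)^{-\nu}$ with $\nu := \delta_2 - \delta_1 > 0$.

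Rather than estimate the convolution sum directly, I would make this balance rigorous by a single induction establishing $|w_t| \le M (t+1)^{-\nu}$ for all $t \ge t_0$, for a suitable constant $M$ and threshold $t_0$. From the recursion and $0 < 1 - r_1(t) < 1$ (which holds once $(t+1)^{\delta_1} > c_1$),
\[
|w_{t+1}| \le \big(1 - r_1(t)\big) M (t+1)^{-\nu} + c_2 (t+1)^{-\delta_2}.
\]
Dividing the target inequality $|w_{t+1}| \le M(t+2)^{-\nu}$ by $(t+1)^{-\nu}$, expanding $(t+2)^{-\nu} = (t+1)^{-\nu}\big(1 - \nu(t+1)^{-1} + o((t+1)^{-1})\big)$, and using $\nu - \delta_2 = -\delta_1$, the inductive step reduces to
\[
(c_2 - M c_1)(t+1)^{-\delta_1} \le - M\nu (t+1)^{-1} + o\big((t+1)^{-1}\big).
\]
Since $\delta_1 < 1$, the left-hand side is of strictly larger order than the right, so it suffices that its coefficient be nonpositive: choosing any $M > c_2/c_1$ makes $c_2 - M c_1 < 0$ and the inequality holds for all large $t$. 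Enlarging $M$ if necessary to cover the base case $|w_{t_0}| \le M(t_0+1)^{-\nu}$ then closes the induction.

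With $|w_{t+1}| \le M(t+2)^{-\nu}$ in hand, for any $0 \le \delta_0 < \nu$ we get $(t+1)^{\delta_0}|w_{t+1}| \le M (t+2)^{\delta_0 - \nu} \to 0$, since $\delta_0 - \nu < 0$, which is the claim. The main obstacle is the asymptotic bookkeeping in the inductive step: one must verify that the $(t+1)^{-\delta_1}$ contribution genuinely dominates the $(t+1)^{-1}$ correction produced by the shift $t+1 \mapsto t+2$ in the polynomial weight, which is precisely where the hypothesis $\delta_1 < 1$ enters, and one must handle signs by working throughout with $|w_t|$ and by restricting to $t \ge t_0$ large enough that $r_1(t) \in (0,1)$. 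The remaining estimates are routine.
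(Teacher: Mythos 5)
Your proof is correct, but note that the paper itself contains no proof of this lemma to compare against: it is imported verbatim as Lemma~5 of~\cite{Kar1}, so the relevant comparison is with the classical argument in that reference, which unrolls the recursion exactly as in your first display and then bounds the convolution term $\sum_{k=0}^{t}\bigl(\prod_{s=k+1}^{t}(1-r_1(s))\bigr)r_2(k)$ directly, by splitting the sum at an intermediate index and estimating the transition products via $1-x\le e^{-x}$ and the integral comparison you quote. You keep the unrolling only for the homogeneous part and replace the convolution estimate by a single induction on the envelope $\lvert w_t\rvert\le M(t+1)^{-\nu}$ with $\nu=\delta_2-\delta_1$, and your bookkeeping checks out: since $\nu+\delta_1=\delta_2$, the hypothesis gives $\lvert w_{t+1}\rvert\le M(t+1)^{-\nu}+(c_2-Mc_1)(t+1)^{-\delta_2}$, and with $(t+2)^{-\nu}=(t+1)^{-\nu}\bigl(1-\nu(t+1)^{-1}+o((t+1)^{-1})\bigr)$ the step closes whenever $(Mc_1-c_2)(t+1)^{-\delta_1}\ge M\nu(t+1)^{-1}+o((t+1)^{-1})$, which holds for all large $t$ precisely because $\delta_1<1$ and $M>c_2/c_1$; enlarging $M$ only strengthens the margin $Mc_1-c_2$, so it safely absorbs the base case, and restricting to $t\ge t_0$ with $r_1(t)\in(0,1)$ legitimizes working with $\lvert w_t\rvert$. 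One small observation: once the induction is set up this way, the homogeneous-term analysis in your opening paragraph is redundant, since the enlarged $M$ at the base case already covers the initial condition. What your route buys is a strictly stronger conclusion than the statement requires: a bound at the critical exponent $\nu$ itself, matching the quasi-stationary rate $w_t\approx(c_2/c_1)(t+1)^{-\nu}$ (which is sharp), whereas the cited lemma asserts only $(t+1)^{\delta_0}w_t\to 0$ for sub-critical $\delta_0<\nu$; the stated limit then follows immediately from $(t+1)^{\delta_0}\lvert w_t\rvert\le M(t+1)^{\delta_0-\nu}\to 0$.
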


Second, the following result comes as a consequence of Lemma 3 in~\cite{ChenSAGE} and studies the convergence of scalar time-varying systems of the form
\begin{equation}\label{eqn: timeVaryingSystem2}
	w_{t+1} = \left(1 - \frac{r_1(t) c_3}{\left(\left\lvert w_{t} \right\rvert + c_5 \right)(t+1)^{\delta_3}} \right) w_t + \frac{r_1(t) c_4}{(t+1)^{\delta_4}},
\end{equation}
where $r_1(t) = \frac{c_1}{(t+1)^{\delta_1}}$, $c_3, c_4, c_5 > 0$, and $ 0 < \delta_3 < \delta_4 < \delta_1.$ 

\begin{lemma}[Lemma 3 in~\cite{ChenSAGE}]\label{lem: timeVaryingSystem1}
The system in~\eqref{eqn: timeVaryingSystem2} satisfies
\begin{equation}\label{eqn: timeVaryingConvergence}
	\lim_{t \rightarrow \infty} \left( t+1 \right)^{\delta_0} w_t = 0,
\end{equation}
for every $0 \leq \delta_0 < \delta_4 - \delta_3$. 
\end{lemma}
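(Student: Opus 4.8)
The plan is to tame the state dependence in the decay coefficient and reduce \eqref{eqn: timeVaryingSystem2} to a purely \emph{linear} time-varying recursion to which Lemma~\ref{lem: timeVaryingConvergence} applies. Writing $g(t) = \frac{c_1 c_3}{(\lvert w_t\rvert + c_5)(t+1)^{\delta_1+\delta_3}}$ and $h(t) = \frac{c_1 c_4}{(t+1)^{\delta_1+\delta_4}}$, the recursion reads $w_{t+1} = (1 - g(t))\, w_t + h(t)$. The effective decay exponent is $\delta_1+\delta_3$ and the forcing exponent is $\delta_1+\delta_4$, so the target rate $\delta_4 - \delta_3$ is exactly the difference of the two, matching the gap produced by Lemma~\ref{lem: timeVaryingConvergence}. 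The only genuine difficulty is that $g(t)$ depends on $\lvert w_t\rvert$, making the recursion nonlinear; everything hinges on replacing $g(t)$ by a deterministic lower bound, which is legitimate only once $\{w_t\}$ is known to be bounded.

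\textbf{Step 1 (uniform boundedness).} First I would show $\sup_t \lvert w_t\rvert =: B < \infty$. Since $g(t) \le \frac{c_1 c_3}{c_5\,(t+1)^{\delta_1+\delta_3}}$ \emph{independently} of $w_t$, there is a finite $T_1$ with $0 < g(t) < 1$ for all $t \ge T_1$ and every value of $w_t$. For $t \ge T_1$ I use a drift argument: when $w_t$ is large and positive, $\frac{w_t}{w_t + c_5} \ge \tfrac12$, so the one-step change $w_{t+1}-w_t$ is at most $-\frac{c_1 c_3}{2(t+1)^{\delta_1+\delta_3}} + \frac{c_1 c_4}{(t+1)^{\delta_1+\delta_4}}$, which is negative for large $t$ because $\delta_3 < \delta_4$ makes the slower-decaying decrement dominate the forcing. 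Hence there is a threshold above which the iterate is pushed downward, so $w_t$ cannot escape upward beyond that threshold plus a single vanishing increment $h(t)$. When $w_t < 0$, both $-g(t) w_t$ and $h(t)$ are positive, so $w_{t+1} > w_t$ and the iterates are bounded below by their value at $T_1$. Combining with the finitely many initial iterates yields the uniform bound $B$.

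\textbf{Steps 2--3 (linearization and comparison).} With $\lvert w_t\rvert \le B$ I lower-bound the decay by the deterministic sequence $\widehat g(t) = \frac{c_1 c_3}{(B + c_5)(t+1)^{\delta_1+\delta_3}} \le g(t)$. For $t \ge T_1$ we have $0 < \widehat g(t) \le g(t) < 1$, so the triangle inequality gives $\lvert w_{t+1}\rvert \le (1 - g(t))\lvert w_t\rvert + h(t) \le (1 - \widehat g(t))\lvert w_t\rvert + h(t)$. Introducing the linear comparison system $z_{t+1} = (1 - \widehat g(t))\, z_t + h(t)$ with $z_{T_1} = \lvert w_{T_1}\rvert$, an immediate induction gives $\lvert w_t\rvert \le z_t$ for all $t \ge T_1$. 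The system for $z_t$ matches the template of Lemma~\ref{lem: timeVaryingConvergence} with decay exponent $\delta_1+\delta_3$ and forcing exponent $\delta_1+\delta_4$; after checking that these lie in $(0,1)$ in the order required by that lemma (in particular the non-summability $\delta_1+\delta_3 < 1$, which makes the homogeneous factor vanish), it yields $\lim_{t\to\infty}(t+1)^{\delta_0} z_t = 0$ for every $0 \le \delta_0 < (\delta_1+\delta_4) - (\delta_1+\delta_3) = \delta_4 - \delta_3$. Since $0 \le \lvert w_t\rvert \le z_t$, the same weighted limit holds for $\lvert w_t\rvert$, which is \eqref{eqn: timeVaryingConvergence}.

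\textbf{Main obstacle.} The crux is Step 1: because the contraction rate is throttled by the factor $\frac{1}{\lvert w_t\rvert + c_5}$, a large state slows its own decay, so one cannot immediately invoke a linear comparison. The boundedness argument must therefore be self-contained, and it is precisely the hypothesis $\delta_3 < \delta_4$ — guaranteeing the decrement outweighs the forcing for large states — that prevents the iterates from running away. Once $B$ is secured, the decoupling of $g(t)$ from the state and the reduction to Lemma~\ref{lem: timeVaryingConvergence} are routine, and the rate $\delta_4 - \delta_3$ falls out automatically.
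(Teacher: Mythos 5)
There is no in-paper proof to compare against: the paper imports this lemma wholesale from~\cite{ChenSAGE} (``Lemma 3 in~\cite{ChenSAGE}'') and never proves it. Judged on its own merits, your reconstruction is the natural route and is essentially sound. The drift argument in Step 1 is correct: for $w_t \geq c_5$ the decrement $g(t)w_t \geq \tfrac{c_1 c_3}{2}(t+1)^{-(\delta_1+\delta_3)}$ eventually dominates the forcing $c_1 c_4 (t+1)^{-(\delta_1+\delta_4)}$ precisely because $\delta_3 < \delta_4$; when $w_t < 0$ the iterate increases; and once $g(t)<1$ a nonnegative iterate stays nonnegative, so $\sup_t \lvert w_t \rvert = B < \infty$. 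Steps 2--3 are also clean, and you were right to apply Lemma~\ref{lem: timeVaryingConvergence} to the \emph{exact} linear comparison system $z_t$ rather than to the inequality satisfied by $\lvert w_t \rvert$, with the induction $\lvert w_t\rvert \le z_t$ justified by $1 - \widehat{g}(t) \geq 0$ for $t \geq T_1$.

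The one genuine gap is the parenthetical ``after checking'' in Step 3: that check cannot be completed from the stated hypotheses. The lemma assumes only $0 < \delta_3 < \delta_4 < \delta_1$ (and $c_3, c_4, c_5 > 0$), which implies neither $\delta_1 + \delta_4 < 1$ nor even the non-summability condition $\delta_1 + \delta_3 < 1$ on which your entire reduction rests --- Lemma~\ref{lem: timeVaryingConvergence} explicitly requires its two exponents to lie in $(0,1)$. This is not a technicality: if $\delta_1 + \delta_3 > 1$ the conclusion is false. Indeed, choose the constants so that $c_1 c_3 < c_5$; then $g(t) \leq c_1 c_3 c_5^{-1} (t+1)^{-(\delta_1+\delta_3)} < 1$ is summable uniformly in $w_t$, and since $h(t) > 0$, any $w_0 > 0$ gives $w_t \geq w_0 \prod_{s=0}^{t-1}\left(1 - g(s)\right) \geq \pi_0 w_0 > 0$ for all $t$, where $\pi_0$ is the (positive) value of the convergent infinite product, so $w_t \not\to 0$. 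In other words, an exponent condition such as $\delta_1 + \delta_4 < 1$ is a \emph{necessary} hypothesis that was silently dropped in the paper's restatement of the source lemma (and which the paper's own invocation, with $\delta_1 = \tau_1$, $\delta_3 = \tau_\gamma$, $\delta_4 = \tau_3$, would likewise need). Your proof becomes correct once you add that standing assumption explicitly; as written, the load-bearing verification is asserted rather than performed, and under the lemma's literal hypotheses it would fail.
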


Finally, the following result from~Lemma 4.2 in~\cite{ChenSAFE} studies perturbations to positive definite matrices:
\begin{lemma}\label{lem: main}
 Let $A_1 \succ 0$ ($A_1 \in \mathbb{R}^{k \times k}$) be a symmetric, positive definite matrix with minimum eigenvalue $\lambda_{\min} \left(A_1 \right)$. Let $x \neq 0$, $x \in\mathbb{R}^k$, and let $y \in \mathbb{R}^k$ satisfy
$\left\lVert y \right\rVert_2 < \lambda_{\min} \left(A_1 \right) \left\lVert x \right\rVert_2.$
Then there exists $A_2 \succ 0$ such that
\begin{equation}\label{eqn: mainlem1}
	A_2 x = A_1 x + y,
\end{equation}
with a minimum eigenvalue that satisfies
\begin{equation}\label{eqn: mainlem2}
	\lambda_{\min} \left( A_2 \right) \geq \lambda_{\min} \left(A_1\right) - \frac{\left\lVert {y} \right\rVert_2}{\left \lVert {x} \right\rVert_2}.
\end{equation}
\end{lemma}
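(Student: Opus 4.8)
The plan is to realize $A_2$ as an additive symmetric perturbation $A_2 = A_1 + E$, where $E = E^\intercal$ is chosen so that (i) $Ex = y$, which immediately gives $A_2 x = A_1 x + y$, and (ii) $\lambda_{\min}(E) \geq -\|y\|_2/\|x\|_2$. Granting such an $E$, Weyl's inequality for symmetric matrices yields $\lambda_{\min}(A_2) \geq \lambda_{\min}(A_1) + \lambda_{\min}(E) \geq \lambda_{\min}(A_1) - \|y\|_2/\|x\|_2$, which is the claimed bound; moreover the hypothesis $\|y\|_2 < \lambda_{\min}(A_1)\|x\|_2$ makes this lower bound strictly positive, so $A_2 \succ 0$. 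Thus the whole problem reduces to constructing one symmetric $E$ with a prescribed action on the single vector $x$ and a controlled smallest eigenvalue.

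The conceptual obstacle, which I expect to be the crux, is that the naive symmetric rank-two update (something proportional to $xy^\intercal + yx^\intercal$, corrected so that $Ex=y$) does \emph{not} suffice: restricting such an $E$ to the plane $\spanSpace\{x,y\}$ and computing the eigenvalues of the resulting $2\times 2$ block shows that its smallest eigenvalue can fall below $-\|y\|_2/\|x\|_2$ when $x^\intercal y < 0$, violating (ii). The fix is to absorb the allowed negativity into a uniform shift. Setting $\mu = \|y\|_2/\|x\|_2$ and writing $E = F - \mu I$, condition (ii) becomes automatic as soon as $F \succeq 0$, while the action constraint $Ex = y$ rewrites as $Fx = y + \mu x =: w$. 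So it suffices to exhibit a positive semidefinite $F$ with $Fx = w$.

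Such an $F$ exists because $x^\intercal w \geq 0$: indeed $x^\intercal w = x^\intercal y + \mu\|x\|_2^2 = x^\intercal y + \|y\|_2\|x\|_2 \geq 0$ by Cauchy--Schwarz. When $x^\intercal w > 0$, the rank-one matrix $F = ww^\intercal/(x^\intercal w)$ is positive semidefinite and satisfies $Fx = w\,(w^\intercal x)/(x^\intercal w) = w$, as required. The only degenerate case is $x^\intercal w = 0$, i.e.\ equality in Cauchy--Schwarz with $y$ anti-parallel to $x$ (so $y = -\mu x$, which also covers $y = 0$); there one takes instead the rank-one $E = -\mu\,xx^\intercal/\|x\|_2^2$, which is symmetric, satisfies $Ex = -\mu x = y$, and has smallest eigenvalue exactly $-\mu$.

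Assembling the pieces, $A_2 = A_1 + F - \mu I$ (respectively $A_1 - \mu\,xx^\intercal/\|x\|_2^2$ in the degenerate case) is symmetric, satisfies $A_2 x = A_1 x + y$, and obeys $\lambda_{\min}(A_2) \geq \lambda_{\min}(A_1) - \mu$ by Weyl; positivity of this lower bound under the stated hypothesis gives $A_2 \succ 0$, completing the argument. The only calculations required are the elementary Cauchy--Schwarz estimate and the verification $Fx = w$, so no heavy machinery is needed once the PSD-shift reformulation is in place.
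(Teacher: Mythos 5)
Your proof is correct, and there is in fact nothing in the paper to measure it against line by line: the paper does not prove this lemma at all, but imports it wholesale as Lemma 4.2 of \cite{ChenSAFE}. So your argument is a genuine, self-contained replacement for an external citation. The construction checks out at every step: with $\mu = \lVert y \rVert_2 / \lVert x \rVert_2$, writing $A_2 = A_1 + F - \mu I$ reduces the problem to finding $F \succeq 0$ with $Fx = w := y + \mu x$; Cauchy--Schwarz gives $x^\intercal w = x^\intercal y + \lVert x \rVert_2 \lVert y \rVert_2 \geq 0$; when $x^\intercal w > 0$ the rank-one choice $F = ww^\intercal/(x^\intercal w)$ is positive semidefinite and satisfies $Fx = w$; and Weyl's inequality then delivers $\lambda_{\min}(A_2) \geq \lambda_{\min}(A_1) + \lambda_{\min}(F - \mu I) \geq \lambda_{\min}(A_1) - \mu$, which is strictly positive under the hypothesis $\lVert y \rVert_2 < \lambda_{\min}(A_1)\lVert x \rVert_2$, so $A_2 \succ 0$ in the paper's (symmetric) sense. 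Your preliminary warning about the naive symmetric interpolant is also accurate: for $\lVert x \rVert_2 = 1$ and $y = \alpha x + \beta u$ with $u \perp x$, $\lVert u \rVert_2 = 1$, the standard rank-two update restricted to $\spanSpace\{x, u\}$ is $\left(\begin{smallmatrix} \alpha & \beta \\ \beta & 0 \end{smallmatrix}\right)$, whose smallest eigenvalue $\bigl(\alpha - \sqrt{\alpha^2 + 4\beta^2}\,\bigr)/2$ falls below $-\lVert y \rVert_2$ whenever $\alpha < 0$, so the shift-by-$\mu I$ reformulation is genuinely needed, not just convenient.

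One small simplification you missed: in your degenerate case, $x^\intercal w = 0$ is equality in Cauchy--Schwarz and forces $y = -\mu x$, hence $w = y + \mu x = 0$ identically; so you could simply take $F = 0$ (i.e., $A_2 = A_1 - \mu I$) and avoid the separate rank-one construction $E = -\mu\, xx^\intercal/\lVert x \rVert_2^2$ altogether, though your choice is equally valid (and perturbs $A_1$ on a smaller subspace). Either way, the case analysis is exhaustive and the eigenvalue bound holds with the claimed constant, so the lemma is fully established by elementary means.
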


\subsection{Proof of Lemma 1}
\begin{proof}
Let
\begin{equation}\label{eqn: xHatStack}
	\widehat{\mathbf{x}}_t = \widetilde{\mathbf{x}}_t - \left(\mathbf{1}_N \otimes I_M \right) \mathbf{\overline{x}}_t,
\end{equation}
stack $\widetilde{x}_n(t) - \overline{\mathbf{x}}_t$ across all agents. For each component $m=1, \dots, M$, let \[\mathcal{J}_m = \left\{ n_{m, 1}, \dots, n_{m, \left\lvert \mathcal{J}_m \right\rvert} \right\}\] be the set of agents interested in recovering $\left[ \theta^* \right]_m$. Then, for each agent $n \in \mathcal{J}_m$, the canonical basis vector $e^\intercal_{(n-1)M + m}$ (of $\mathbb{R}^{NM}$) selects agent $n$'s estimate of $\left[\theta^* \right]_m$ from $\widetilde{\mathbf{x}}_t$: 
\[e^\intercal_{(n-1)M + m} \widetilde{\mathbf{x}}_t = \left[ \widetilde{x}_n(t) \right]_m.\]
We collect all such canonical basis row vectors for every agent $n \in \mathcal{J}_m$ in the matrix
\begin{equation}\label{eqn: qmDef}
	\mathcal{Q}_m = \left[\!\!\begin{array}{ccc}e_{\left(n_{m, 1} - 1 \right)M + m} \!\!& \cdots &\!\! e_{\left( n_{m, \left\lvert \mathcal{J}_m \right\rvert} - 1 \right)M + m} \end{array} \!\! \right]^\intercal. 
\end{equation}
Then, the $\left\lvert \mathcal{J}_m \right\vert$-dimensional vector $\widehat{\mathbf{x}}_t^m = \mathcal{Q}_m \widehat{\mathbf{x}}_t,$ collects the terms $\widehat{x}_n(t) - \overline{\mathbf{x}}_t$ from all agents $n$ interested in recovering component $m$.

From~\eqref{eqn: stackedTildeX}, we may show that $\widehat{\mathbf{x}}_t^m$ follows the dynamics
\begin{equation}\label{eqn: hatXm}
\begin{split}
	&\widehat{\mathbf{x}}_{t+1}^m = \left(I_{\left\lvert \mathcal{J}_M \right\rvert} - P_{\left\lvert \mathcal{J}_M \right\rvert, 1} - \beta_t L_m\right) \widehat{\mathbf{x}}_t^m + \\
	&\: \alpha_t \mathcal{Q}_m \left(I_{NM} - \left(\mathbf{1}_N \mathbf{1}_N^\intercal \right) \otimes \mathcal{D} \right)D^\intercal_H \mathbf{K}_t \left(\mathbf{y} - D_H \widetilde{\mathbf{x}}_t \right),
\end{split}
\end{equation}
where $P_{\left\lvert \mathcal{J}_m \right\rvert, 1} = \frac{1}{\left\lvert \mathcal{J}_m \right\rvert} \mathbf{1}_{\left\lvert \mathcal{J}_m \right\rvert} \mathbf{1}^\intercal_{\left\lvert \mathcal{J}_m \right\rvert}$ and $L_m$ is the Laplacian of $G_m$ (the subgraph of $G$ induced by the agents in $\mathcal{J}_m$). Since $G_m$ is connected, and, by definition of $\mathbf{K}_t$, we have $\left\lVert \mathbf{K}_t \left(\overline{\mathbf{y}}_t  - D_h \widetilde{\mathbf{x}}_t \right) \right\rVert_\infty \leq \gamma_t$, which means that there exists a finite constant $C_1 > 0$ such that
\begin{equation}\label{eqn: appendix1}
	C_1  \gamma_t \!\! \geq \!\! \left\lVert \mathcal{Q}_m \left(I_{NM} \!-\! \left(\mathbf{1}_N \mathbf{1}_N^\intercal \right) \otimes \mathcal{D} \right)D^\intercal_H \mathbf{K}_t \left(\mathbf{y} - D_H \widetilde{\mathbf{x}}_t \right) \right\rVert_2.
\end{equation}

Then, for $t$ large enough, we have
\begin{equation}\label{eqn: hatXm}
\begin{split}
	&\left\lVert \widehat{\mathbf{x}}_{t+1}^m \right\rVert_2 \leq \left(1 - \beta_t \lambda_2 \left(L_m \right)\right) \left\lVert \widehat{\mathbf{x}}_t^m \right\rVert_2 + C_1 \alpha_t \gamma_t.
\end{split}
\end{equation}
Since $G_m$ is connected ($\lambda_2 \left( L_m \right) > 0$), the relationship in~\eqref{eqn: hatXm} falls under the purview of Lemma~\ref{lem: timeVaryingConvergence}, which yields
\begin{equation}\label{eqn: resConsensus1}
	\lim_{t \rightarrow \infty} \left(t+1\right)^{\tau_3} \left\lVert \widehat{\mathbf{x}}_t^m \right\rVert_2 = 0,
\end{equation}
for every $0 \leq \tau_3 < \tau_\gamma + \tau_1 - \tau_2$ and every component $m = 1, \dots, M$. Let $\overline{\mathcal{Q}}$ be the matrix $\mathcal{Q}$ with all zero \textit{rows} removed, and note that $\overline{\mathcal{Q}} \widehat{\mathbf{x}}_t$ is a permutation of the vector $\left[\begin{array}{ccc} {\widehat{\mathbf{x}}_t^{1\intercal}} & \cdots & {\widehat{\mathbf{x}}_t^{M\intercal}}\end{array} \right]^\intercal.$ By the triangle inequality, we have \[\left\lVert \mathcal{Q} \widehat{\mathbf{x}}_t \right\rVert_2 = \left\lVert \overline{\mathcal{Q}} \widehat{\mathbf{x}}_t \right\rVert_2 \leq \sum_{m = 1}^M \left\lVert \widehat{\mathbf{x}}_t^m \right\rVert_2,\] which, combined with~\eqref{eqn: resConsensus1}, yields the desired result~\eqref{eqn: resilientConsensus}. 
\end{proof}

\subsection{Proof of Lemma 2}
To we prove Lemma~\ref{lem: avg2}, we require the following result. 
\begin{lemma}\label{lem: avg1}
	Let the auxiliary threshold $\overline{\gamma}_t$ be defined as
\begin{equation}\label{eqn: lineGammaDef}
	%\overline{\gamma}_t = \frac{\Gamma - Y(t+1)^{\tau_\gamma - \tau_3} - W(t+1)^{\tau_\gamma - \frac{1}{2} - \epsilon_W }}{(t+1)^{\tau_\gamma}},
	\overline{\gamma}_t = \frac{\Gamma}{(t+1)^{\tau_\gamma}} - \frac{X}{(t+1)^{\tau_3}},
\end{equation}
where $\tau_3 = \tau_\gamma + \tau_1 - \tau_2 - \epsilon_X$ for arbitrarily small $0 < \epsilon_X < \tau_1 - \tau_2,$  and, recall, $\gamma_t = \frac{\Gamma}{(t+1)^{\tau_\gamma}}$.
As long as $\lambda_{\min} \left( \mathcal{G}_{\mathcal{N}} \right) > \Delta_{\mathcal{A}}$ (resilience condition~\eqref{eqn: resilienceCondition}), then there exists $T_0 \geq 0$, and $0 < X < \infty$ such that 
\begin{enumerate}
\item $\left\lVert \mathcal{Q}\widehat{\mathbf{x}}_t \right\rVert_2 \leq \frac{X}{(t+1)^{\tau_3}}$, and
\item if, for any $T \geq T_0$, $\left\lVert \overline{\mathbf{e}}_T \right \rVert_2 \leq \overline{\gamma}_T$, then, for all $t \geq T$, $\left\lVert \overline{\mathbf{e}}_t \right\rVert_2 \leq \overline{\gamma}_t$.
\end{enumerate}
\end{lemma}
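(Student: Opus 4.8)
The plan is to prove the two claims in sequence, since the first is essentially a restatement of Lemma~\ref{lem: resilientConsensus} while the second is an invariance (induction) argument that relies on it. For the first claim, I would note that Lemma~\ref{lem: resilientConsensus} gives $\lim_{t\to\infty}(t+1)^{\tau_3}\|\mathcal{Q}\widehat{\mathbf{x}}_t\|_2 = 0$ for the specific exponent $\tau_3 = \tau_\gamma + \tau_1 - \tau_2 - \epsilon_X$, because $\epsilon_X > 0$ keeps $\tau_3$ strictly below $\tau_\gamma + \tau_1 - \tau_2$. A sequence converging to $0$ is bounded, so $X := \sup_t (t+1)^{\tau_3}\|\mathcal{Q}\widehat{\mathbf{x}}_t\|_2$ is finite, and this is exactly the bound in Part~1. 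I would fix this $X$ once and for all, noting that $\tau_3 > \tau_\gamma$ (since $\epsilon_X < \tau_1 - \tau_2$), so $\overline{\gamma}_t = \gamma_t - X(t+1)^{-\tau_3} > 0$ for $t$ large, a condition I absorb into the eventual choice of $T_0$.

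The crux of the second claim is a single observation about the gain $k_p(t)$ in~\eqref{eqn: smallK}, followed by an induction on $t \geq T$. Assuming $\|\overline{\mathbf{e}}_t\|_2 \leq \overline{\gamma}_t$, I combine this with Part~1 to bound each agent's residual on its coupling set: for every uncompromised $p \in \mathcal{N}$ owned by agent $n$, using $\|h_p\|_2 = 1$, $\widetilde{\mathcal{I}}_n \subseteq \mathcal{I}_n$, and $\widetilde{x}_n(t) - \theta^* = \overline{\mathbf{e}}_t + \widehat{x}_n(t)$, I get $|y^{(p)} - {h^c_p}^\intercal x_n(t)| = |h_p^\intercal(\widetilde{x}_n(t) - \theta^*)| \leq \|\overline{\mathbf{e}}_t\|_2 + \|\mathcal{Q}\widehat{\mathbf{x}}_t\|_2 \leq \overline{\gamma}_t + X(t+1)^{-\tau_3} = \gamma_t$. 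The auxiliary threshold $\overline{\gamma}_t$ is engineered precisely so this inequality forces $k_p(t) = 1$ on all uncompromised measurements, i.e.\ they are never saturated. Propagating the generalized average through~\eqref{eqn: stackedTildeX} (the censored Laplacian term annihilates the generalized average, so only the innovation survives), I would then write the error recursion as $\overline{\mathbf{e}}_{t+1} = (I - \alpha_t\mathcal{D}\mathcal{G}_{\mathcal{N}})\overline{\mathbf{e}}_t + \alpha_t\mathcal{D} r_t + \alpha_t\mathcal{D} s_t$, where $r_t$ collects the saturated contributions of the compromised measurements and satisfies $\|r_t\|_2 \leq \Delta_{\mathcal{A}}\gamma_t$ by the definition~\eqref{eqn: maxDisturbance} of $\Delta_{\mathcal{A}}$, and $s_t$ is the consensus-error correction with $\|s_t\|_2 = O((t+1)^{-\tau_3})$ by Part~1.

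To close the induction I would split on the size of $\|\overline{\mathbf{e}}_t\|_2$. When $\|\overline{\mathbf{e}}_t\|_2$ exceeds $\Delta_{\mathcal{A}}\gamma_t/\lambda_{\min}(\mathcal{G}_{\mathcal{N}})$, the hypothesis $\|r_t\|_2 \leq \Delta_{\mathcal{A}}\gamma_t < \lambda_{\min}(\mathcal{G}_{\mathcal{N}})\|\overline{\mathbf{e}}_t\|_2$ of Lemma~\ref{lem: main} holds, so the contraction and attack terms merge into $-\alpha_t\mathcal{D}A_2\overline{\mathbf{e}}_t$ with $A_2 \succ 0$ and $\lambda_{\min}(A_2) \geq \lambda_{\min}(\mathcal{G}_{\mathcal{N}}) - \Delta_{\mathcal{A}}\gamma_t/\|\overline{\mathbf{e}}_t\|_2$, bounded below by the strictly positive constant $\lambda_{\min}(\mathcal{G}_{\mathcal{N}}) - \Delta_{\mathcal{A}}$ guaranteed by the resilience condition~\eqref{eqn: resilienceCondition}; this yields a genuine contraction $\|\overline{\mathbf{e}}_{t+1}\|_2 \leq (1 - \alpha_t\mu)\|\overline{\mathbf{e}}_t\|_2 + O(\alpha_t(t+1)^{-\tau_3})$ that I would check decays faster than the change from $\overline{\gamma}_t$ to $\overline{\gamma}_{t+1}$ for $t$ large, keeping the iterate below threshold. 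When $\|\overline{\mathbf{e}}_t\|_2 \leq \Delta_{\mathcal{A}}\gamma_t/\lambda_{\min}(\mathcal{G}_{\mathcal{N}})$, I instead use the crude bound $\|\overline{\mathbf{e}}_{t+1}\|_2 \leq \|\overline{\mathbf{e}}_t\|_2 + O(\alpha_t\gamma_t)$; since $\Delta_{\mathcal{A}}/\lambda_{\min}(\mathcal{G}_{\mathcal{N}}) < 1$ strictly, the bound leaves a $\Theta((t+1)^{-\tau_\gamma})$ gap to $\overline{\gamma}_{t+1}$ that dominates both the $O((t+1)^{-\tau_1-\tau_\gamma})$ increment and the $X(t+1)^{-\tau_3}$ correction, again keeping the iterate below threshold. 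Choosing $T_0$ large enough that both regime estimates hold closes the induction.

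The hard part will be making the error recursion rigorous rather than the case analysis: I must verify that the generalized-average operator $\mathcal{D}(\mathbf{1}_N^\intercal \otimes I_M)$ truly annihilates the censored Laplacian block $\mathbf{L}$ in~\eqref{eqn: ltDef}, so that no consensus drift leaks into $\overline{\mathbf{e}}_t$, and I must handle the fact that $\mathcal{D}\mathcal{G}_{\mathcal{N}}$ is not symmetric by passing to the $\mathcal{D}^{-1/2}$-weighted inner product, in which $\mathcal{D}^{1/2}\mathcal{G}_{\mathcal{N}}\mathcal{D}^{1/2}$ is symmetric positive definite and Lemma~\ref{lem: main} applies cleanly. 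The norm-equivalence constants between the weighted and Euclidean norms must then be tracked through the gap estimates so that a single pair $(T_0, X)$ works simultaneously in both regimes.
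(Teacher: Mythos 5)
Your Part~1 and the opening of Part~2 match the paper: you get $X$ from Lemma~\ref{lem: resilientConsensus} exactly as the paper does, and your key observation --- that the induction hypothesis $\left\lVert \overline{\mathbf{e}}_t \right\rVert_2 \leq \overline{\gamma}_t$ together with $\left\lVert \mathcal{Q}\widehat{\mathbf{x}}_t\right\rVert_2 \leq X(t+1)^{-\tau_3}$ forces $\left\lvert y^{(p)} - {h_p^c}^\intercal x_n(t)\right\rvert \leq \overline{\gamma}_t + X(t+1)^{-\tau_3} = \gamma_t$, hence $k_p(t)=1$ on all of $\mathcal{N}$ --- is precisely the paper's mechanism, as is the resulting error recursion with attack term bounded by $\Delta_{\mathcal{A}}\gamma_t$ and a consensus correction of order $(t+1)^{-\tau_3}$. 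Where you diverge is in closing the induction, and there your large-regime estimate fails. You claim that when $\left\lVert \overline{\mathbf{e}}_t\right\rVert_2 > \Delta_{\mathcal{A}}\gamma_t/\lambda_{\min}(\mathcal{G}_{\mathcal{N}})$, Lemma~\ref{lem: main} gives $\lambda_{\min}(A_2) \geq \lambda_{\min}(\mathcal{G}_{\mathcal{N}}) - \Delta_{\mathcal{A}}\gamma_t/\left\lVert \overline{\mathbf{e}}_t\right\rVert_2$, ``bounded below by $\lambda_{\min}(\mathcal{G}_{\mathcal{N}}) - \Delta_{\mathcal{A}}$.'' That last step requires $\left\lVert \overline{\mathbf{e}}_t\right\rVert_2 \geq \gamma_t$, but under the induction hypothesis $\left\lVert \overline{\mathbf{e}}_t\right\rVert_2 \leq \overline{\gamma}_t < \gamma_t$ always, so the ratio $\gamma_t/\left\lVert \overline{\mathbf{e}}_t\right\rVert_2$ exceeds $1$ throughout; worse, as $\left\lVert \overline{\mathbf{e}}_t\right\rVert_2$ approaches your regime boundary $\Delta_{\mathcal{A}}\gamma_t/\lambda_{\min}(\mathcal{G}_{\mathcal{N}})$ from above, the contraction constant degenerates to $0$. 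Since your small-regime slack argument only operates below that same boundary, there is a band of values of $\left\lVert \overline{\mathbf{e}}_t\right\rVert_2$ just above $\Delta_{\mathcal{A}}\gamma_t/\lambda_{\min}(\mathcal{G}_{\mathcal{N}})$ in which neither of your two estimates keeps the iterate below $\overline{\gamma}_{t+1}$, so the induction does not close as written.

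The repair is either elementary or the paper's own device. Elementary: move the split to $c\gamma_t$ for a fixed $c \in \bigl(\Delta_{\mathcal{A}}/\lambda_{\min}(\mathcal{G}_{\mathcal{N}}),\, 1\bigr)$; above it you get the uniform contraction $\mu = \lambda_{\min}(\mathcal{G}_{\mathcal{N}}) - \Delta_{\mathcal{A}}/c > 0$, below it the gap $(1-c)\gamma_t$ dominates the $O(\alpha_t \gamma_t)$ increment and the $X(t+1)^{-\tau_3}$ correction, and then your decay-rate comparison against $\overline{\gamma}_{t+1}/\overline{\gamma}_t$ (which the paper carries out via $1-x \leq e^{-x}$, reducing to $\alpha_T\overline{\rho} \geq \tau_\gamma/(T+1)$) goes through. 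The paper avoids any case analysis: it partitions the attack term as $\mathbf{b}_t = \overline{\mathbf{b}}_t + \widetilde{\mathbf{b}}_t$ with $\left\lVert \overline{\mathbf{b}}_t\right\rVert_2 \leq \Delta_{\mathcal{A}}\overline{\gamma}_t$ (not $\Delta_{\mathcal{A}}\gamma_t$), folding the excess $\Delta_{\mathcal{A}} X (t+1)^{-\tau_3}$ into the fast-decaying perturbation, and then applies Lemma~\ref{lem: main} not at $\overline{\mathbf{e}}_T$ but at a worst-case point $\overline{\mathbf{e}}_T^*$ on the sphere $\left\lVert \overline{\mathbf{e}}_T^*\right\rVert_2 = \overline{\gamma}_T$ (valid since the bound is maximized on the boundary of the ball). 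The perturbation-to-signal ratio is then exactly $\Delta_{\mathcal{A}}\overline{\gamma}_T/\overline{\gamma}_T = \Delta_{\mathcal{A}}$, yielding the uniform $\kappa = \lambda_{\min}(\mathcal{G}_{\mathcal{N}}) - \Delta_{\mathcal{A}}$ you were after in a single stroke --- this is the idea your proposal is missing. Your two housekeeping points (that $\mathcal{D}(\mathbf{1}_N^\intercal \otimes I_M)$ annihilates $\mathbf{L}$, and that the nonsymmetric $\mathcal{D}\mathcal{G}_{\mathcal{N}}$ needs a weighted norm) are legitimate diligence; the paper asserts the former without proof and absorbs the latter into unexplained constants $C_2, C_3$.
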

\begin{proof}[Proof of Lemma~\ref{lem: avg1}]
As a consequence of~\eqref{eqn: resilientConsensus}, from Lemma~\ref{lem: resilientConsensus}, there exists finite $X$ such that  $\left\lVert \mathcal{Q}\widehat{\mathbf{x}}_t \right\rVert_2 \leq \frac{X}{(t+1)^{\tau_3}}$. What remains is to show that the second condition holds. 

We now derive the dynamics of $\overline{\mathbf{e}}_t$. Recall that \[\overline{\mathbf{x}}_t = \mathcal{D} \left( \mathbf{1}_N^\intercal \otimes I_M \right)\widetilde{\mathbf{x}}_t\] and \[\overline{\mathbf{e}}_t = \overline{\mathbf{x}}_t - \theta^*.\] From the dynamics of $\widetilde{\mathbf{x}}_t$ (equation~\eqref{eqn: stackedTildeX}), we have that $\overline{\mathbf{x}}_t$ evolves according to
\begin{equation}\label{eqn: overlineX}
	\overline{\mathbf{x}}_{t+1} = \overline{\mathbf{x}}_t + \alpha_t \mathcal{D} \left(\mathbf{1}_N^\intercal \otimes I_M \right)D_H^\intercal \mathbf{K}_t \left( \mathbf{y} - D_H \widetilde{\mathbf{x}}_t \right).
\end{equation}
Note that, since $\tau_3 > \tau_\gamma$, for $t$ large enough, $\overline{\gamma}_t > 0$. Recall that, for each agent $n$, the matrix $Q_n$ is an $M \times M$ diagonal matrix where, the $m^{\text{th}}$ diagonal element (for $m = 1, \dots M$) is $1$ if agent $n$ is interested in recovering $\left[\theta^*\right]_m$ (i.e., $m \in \mathcal{I}_n$) and $0$ otherwise. Further recall that, for each $n$, $\widetilde{\mathcal{I}}_n \subseteq \mathcal{I}_n$. Since $\widetilde{\mathcal{I}}_n$ is the set of indices of the nonzero columns of $H_n$, by definition of $Q_n$, we have $H_n Q_n = H_n$ for each agent $n$, which means that
%\begin{equation}\label{eqn: aux2}
	$D_H \mathcal{Q} = D_H.$
Then, we may express $D_H \widetilde{\mathbf{x}}_t$ as
\begin{equation}\label{eqn: avg1proof1}
	D_H \widetilde{\mathbf{x}}_t = D_H \left( \mathbf{1}_N \otimes I_M \right) \widetilde{\mathbf{x}}_t + D_H \mathcal{Q} \widehat{\mathbf{x}}_t,
\end{equation}
where, recall, from~\eqref{eqn: xHatStack}, $\widehat{\mathbf{x}}_t = \widetilde{\mathbf{x}}_t - \left(\mathbf{1}_N \otimes I_M \right) \mathbf{\overline{x}}_t.$

Define
\begin{align}
	\mathbf{K}^{\mathcal{N}}_t &= \diag \left(\widetilde{k}_1(t), \dots, \widetilde{k}_p(t) \right), \\
	\mathbf{K}^{\mathcal{A}}_t &= \mathbf{K}_t - \mathbf{K}^{\mathcal{N}}_t. \label{eqn: ka}
\end{align}
Substituting~\eqref{eqn: avg1proof1} into~\eqref{eqn: overlineX} and performing algebraic manipulations, we may show that $\overline{\mathbf{e}}_t$ follows the dynamics
\begin{equation}\label{eqn: averageErrorDynamics}
	\begin{split}
		\overline{\mathbf{e}}_{t+1} &= \left( I_{M} - \alpha_t\mathcal{D} \sum_{p \in \mathcal{N}} k_p(t) h_p h_p^\intercal \right) \overline{\mathbf{e}}_t - \\
		& \quad \alpha_t \mathcal{D} \left( \left( \mathbf{1}_N^\intercal \otimes I_M \right) D_H^\intercal \mathbf{K}^{\mathcal{N}}_t \left(D_H\mathcal{Q}\widehat{\mathbf{x}}_t\right) + \mathbf{b}_t \right),
	\end{split}
\end{equation}
where $\mathbf{b}_t = \left(\mathbf{1}_N^\intercal \otimes I_M \right) D^\intercal_H \mathbf{K}_t^{\mathcal{A}} \left( \mathbf{y} - D_H \widetilde{\mathbf{x}}_t \right)$ captures the effect of the attack. Using the definition of $\Delta_{\mathcal{A}}$ in~\eqref{eqn: maxDisturbance} and the fact that $\left\lVert \mathbf{K}_t^{\mathcal{A}} \left( \mathbf{y} - D_H \widetilde{\mathbf{x}}_t \right) \right\rVert_\infty \leq \gamma_t$, we have that
\begin{equation}\label{eqn: aux4}
	\left\lVert \mathbf{b}_t \right\rVert_2 \leq \Delta_{\mathcal{A}} \gamma_t = \Delta_{\mathcal{A}}\left(\overline{\gamma}_t + X(t+1)^{-\tau_3} \right). 
\end{equation}
Then, we express $\mathbf{b}_t$ as
\begin{equation}\label{eqn: aux4a}
	\mathbf{b}_t = \overline{\mathbf{b}}_t + \widetilde{\mathbf{b}}_t,
\end{equation}
where $\left\lVert \widetilde{\mathbf{b}}_t \right\rVert_2 \leq  \Delta_{\mathcal{A}} \left(X(t+1)^{\tau_3}\right)$ and $\left\lVert \overline{\mathbf{b}}_t \right\rVert_2 \leq \Delta_{\mathcal{A}} \overline{\gamma}_t$.

Now, we study the evolution of $\left\lVert \overline{\mathbf{e}}_t \right\rVert_2$. We show that, for $T_0$ large enough, if, for some $T \geq T_0$, $\left\lVert \overline{\mathbf{e}}_T \right \rVert_2 \leq \overline{\gamma}_T$, ten, for all $t \geq t$, $\left\lVert \overline{\mathbf{e}}_t \right\rVert_2 \leq \overline{\gamma}_t$. Applying the triangle inequality to the noncompromised measurements $p \in \mathcal{N}$, we have
\begin{align}
	\left\lvert y^{(p)} - {h_p^c}^\intercal x_n(T) \right\rvert &\leq \left\lvert h_p^\intercal \left(Q_n \left(\overline{\mathbf{x}}_T - x_n(T) \right) - \overline{\mathbf{e}}_T \right) \right\rvert, \\
	& \leq \left\lVert \overline{\mathbf{e}}_T\right\rVert_2 + \left\lVert \mathcal{Q} \widehat{\mathbf{x}}_T \right\rVert_2, \\
	& \leq \overline{\gamma}_t + \frac{X}{(T+1)^{\tau_3}} = \gamma_t.
\end{align}
That is, $k_p(T) = 1$, for all uncompromised measurements  $p \in \mathcal{N}$.  From~\eqref{eqn: averageErrorDynamics}, we then have
\begin{equation}
	\begin{split}
		&\overline{\mathbf{e}}_{T+1} = \overline{\mathbf{e}}_{T} - \alpha_T \mathcal{D} \left(\mathcal{G}_{\mathcal{N}} \overline{\mathbf{e}}_T + \overline{\mathbf{b}}_T + \widetilde{\mathbf{b}}_T \right) - \\
		&\quad \alpha_T \mathcal{D} \left(\mathbf{1}_N \otimes I_M \right)D_H^\intercal\mathbf{K}_T^{\mathcal{N}}D_H \mathcal{Q} \widetilde{\mathbf{x}}_T,
	\end{split}
\end{equation}
where $\mathcal{G}_{\mathcal{N}} = \sum_{p \in \mathcal{N}} h_p h_p^\intercal$. Since $\Delta_{\mathcal{A}} \leq \left \lvert \mathcal{A} \right\rvert$ and $\left\lvert \mathcal{A} \right\rvert + \left\lvert \mathcal{N} \right \rvert = P$, we have
\begin{equation}\label{eqn: aux6a}
	\begin{split}
		\left\lVert \overline{\mathbf{e}}_{T+1} \right\rVert_2 & \leq \left\lVert  \left( I_{M} - \alpha_T\mathcal{D} \mathcal{G}_{\mathcal{N}} \right) \overline{\mathbf{e}}_{T} + \alpha_T \mathcal{D} \overline{\mathbf{b}}_{T} \right\rVert_2 + \\
		& \quad\frac{\alpha_T P X}{\underline{J}(T+1)^{\tau_3}},
	\end{split}
\end{equation}
where $\underline{J} = \min_{m = 1, \dots, M} \left\lvert \mathcal{J}_m \right \rvert.$

Since $\left\lVert \overline{\mathbf{e}}_T \right \rVert_2 \leq \overline{\gamma}_T$, there exists $\overline{\mathbf{e}}_T^*$ with $\left\lVert \overline{\mathbf{e}}_T \right\rVert_2 = \overline{\gamma}_T$, such that 
\begin{equation}\label{eqn: aux7}
	\begin{split}
		\left\lVert \overline{\mathbf{e}}_{T+1} \right\rVert_2 & \leq \left\lVert  \left( I_{M} - \alpha_T\mathcal{D} \mathcal{G}_{\mathcal{N}} \right) \overline{\mathbf{e}}^*_{T} + \alpha_T \mathcal{D} \overline{\mathbf{b}}_{T} \right\rVert_2 + \\
		& \quad\frac{\alpha_T P X}{\underline{J} (T+1)^{\tau_3}}.
	\end{split}
\end{equation}
By Lemma~\ref{lem: main}, there exists $\mathcal{G}^*_{T} \succ 0$ such that $\mathcal{G}^*_{T} \overline{\mathbf{e}}_{T}^* = \mathcal{G}_{\mathcal{N}} \overline{\mathbf{e}}_{T}^* + \overline{\mathbf{b}}_T$, with minimum eigenvalue
\begin{equation}\label{eqn: kappaDef}
	\lambda_\min \left( \mathcal{G}^*_{T, \omega} \right) \geq \kappa = \lambda_{\min} \left( \mathcal{G}_{\mathcal{N}} \right) - \Delta_{\mathcal{A}}.
\end{equation}
Substituting in~\eqref{eqn: aux7}, we then have, for some finite $C_2 > 0$.
\begin{equation}\label{eqn: aux8}
	\left\lVert \overline{\mathbf{e}}_{T+1} \right\rVert_2  \leq \left(1 - \alpha_T \kappa C_2 \right)\overline{\gamma}_{T, \omega} + \frac{\alpha_T P X}{\underline{J} (T+1)^{\tau_3}}.
\end{equation}

Using~\eqref{eqn: aux8}, we now show that $\left\lVert \overline{\mathbf{e}}_{T+1} \right\rVert_2  \leq \overline{\gamma}_{T+1}$. It suffices to show that $\left\lVert \overline{\mathbf{e}}_{T+1} \right\rVert_2  \leq \left(\frac{T+1}{T+2}\right)^{\tau_\gamma} \overline{\gamma}_T$. By definition of $\overline{\gamma}_T$, for any $0 < \overline{\Gamma} < \Gamma$, there exists a sufficiently large finite $T$ such that $\overline{\gamma}_t > \frac{\overline{\Gamma}}{(t+1)^{\tau_{\gamma}}}$. As a consequence, we may show that, for sufficiently large finite $T$,~\eqref{eqn: aux8} becomes
\begin{equation}\label{eqn: aux9}
	\left\lVert \overline{\mathbf{e}}_{T+1} \right\rVert_2  \leq \left(1 - \alpha_{T} \rho_{T} \right)\overline{\gamma}_{T},
\end{equation}
where \begin{equation}\label{eqn: rhoDef} \rho_{T} = \kappa C_2 - \frac{P X}{\underline{J}\overline{\Gamma} (T+1)^{\tau_3 - \tau_\gamma}}. \end{equation} The second term in $\rho_{T}$ decays to $0$ as $T$ increases, so, for sufficiently large $T$, $\rho_{T} \geq 0$. 

To proceed, we show that 
\begin{equation}\label{eqn: aux10}
	1 - \alpha_T \rho_T \leq \left(\frac{T+1}{T+2}\right)^{\tau_\gamma}
\end{equation}
 for sufficiently large $T$. Using the inequalities $(1-x) \leq e^{-x}$ for  $x \geq 0$ and $\log \left(\frac{T + 1} {T + 2} \right) \geq 1 -\frac{T + 2} {T + 1} = -\frac{1}{T + 1}$, a sufficient condition for~\eqref{eqn: aux10} is
\begin{equation}\label{eqn: aux11}
	\alpha_T \rho_{T} \geq \frac{\tau_\gamma}{T+1}.
\end{equation}
In the definition of $\rho_{T}$~\eqref{eqn: rhoDef}, the second term decays to $0$ as $T$ increases, which means that, for any constant $0 < \overline{\rho} < \kappa C_2$, there exists sufficiently large finite $T$ such that $\rho_T > \overline{\rho}$. Then, the sufficient condition~\eqref{eqn: aux11} becomes
\begin{equation}\label{eqn: aux12}
	\alpha_T \overline{\rho} \geq \frac{\tau_\gamma}{T+1},
\end{equation}
which is satisfied for all $T \geq \left(\frac{\tau_\gamma}{a \overline{\rho} }\right)^{\frac{1}{{1-\tau_1}}}- 1.$ Thus, there exists $T_0$ sufficiently large such that, if $\left\lVert \overline{\mathbf{e}}_{T} \right\rVert_2 \leq \overline{\gamma}_{T}$ for some $T \geq T_0$, we have $\left\lVert \overline{\mathbf{e}}_{T+1} \right\rVert_2 \leq \overline{\gamma}_{T+1}$. The same analysis holds for all $t = T+2, T+3, \dots$, which completes the proof. 
\end{proof}

We now prove Lemma~\ref{lem: avg2}. 

\begin{proof}[Proof of Lemma~\ref{lem: avg2}]
	As a consequence of Lemma~\ref{lem: avg1}, there exists $T_0 \geq 0$ such that, if at any $T \geq T_0$,  $\left\lVert \overline{\mathbf{e}}_T \right \rVert_2 \leq \overline{\gamma}_T$, then, for all $t \geq T$,  $\left\lVert \overline{\mathbf{e}}_t \right \rVert_2 \leq \overline{\gamma}_t$. If such a $T$ exists, then we have  $\left\lVert \overline{\mathbf{e}}_t \right \rVert_2 \leq \overline{\gamma}_t < \gamma_t,$ which means that $ \overline{\mathbf{e}}_t $ satisfies~\eqref{eqn: averageConv}.

If no such $T$ exists, then, for all $t \geq T_0$,  $\left\lVert \overline{\mathbf{e}}_t \right \rVert_2 > \overline{\gamma}_t.$ Define
\begin{equation}\label{eqn: avgConv2}
	\widehat{K}_{t} = \frac{\overline{\gamma}_{t} + \frac{X}{(t+1)^{\tau_3}}}{\left\lVert \overline{\mathbf{e}}_{t} \right \rVert_2  + \frac{X}{(t+1)^{\tau_3}} }. 
\end{equation}
Using the fact that $\left\lVert \mathcal{Q}\widehat{\mathbf{x}}_t \right\rVert_2 \leq \frac{X}{(t+1)^{\tau_3}}$ and applying the triangle inequality, we may show that $\widehat{K}_t < k_p (t)$ for all $p \in \mathcal{N}$. Rearranging~\eqref{eqn: avgConv2}, we have
\begin{equation}\label{eqn: avgConv3}
	\gamma_t = \widehat{K}_t \left( \left \lVert \overline{\mathbf{e}}_t \right\rVert_2 + \frac{X}{(t+1)^{\tau_3}} \right).
\end{equation}
Recall that, in the dynamics of $\overline{\mathbf{e}}_t$~\eqref{eqn: averageErrorDynamics}, the vector \[\mathbf{b}_t = \left(\mathbf{1}_N^\intercal \otimes I_M \right) D^\intercal_H \mathbf{K}_t^{\mathcal{A}} \left( \mathbf{y} - D_H \widetilde{\mathbf{x}}_t \right)\] represents the effect of the attack and satisfies $\left\lVert \mathbf{b}_t \right\rVert_2 \leq \Delta_{\mathcal{A}} \gamma_t$. Using~\eqref{eqn: avgConv3}, we then partition $\mathbf{b}_t$ (differently from the partition described by~\eqref{eqn: aux4a}) as $\mathbf{b}_t = \overline{\mathbf{b}}_t + \widetilde{\mathbf{b}}_t,$ where $\left\lVert \overline{\mathbf{b}}_{t} \right\rVert_2 \leq \widehat{K}_{t} \Delta_{\mathcal{A}} \left\lVert \overline{\mathbf{e}}_{t} \right\rVert_2$ and $\left\lVert \widetilde{\mathbf{b}}_{t} \right\rVert_2 \leq \frac{ \widehat{K}_{t} \Delta_{\mathcal{A}}X}{(t+1)^{\tau_3}}.$

Substituting for this partition of $\overline{\mathbf{b}}_t$ and using the fact that $\widehat{K}_t > k_p(t)$ for all $p \in \mathcal{N}$, we may show, from~\eqref{eqn: averageErrorDynamics}, that
\begin{equation}\label{eqn: avgConv8}
	\begin{split}
		\left\lVert \overline{\mathbf{e}}_{t+1} \right\rVert_2 & \leq \left\lVert  \left( I_{M} - \alpha_t \widehat{K}_t \mathcal{D} {\mathcal{G}}_{\mathcal{N}} \right) \overline{\mathbf{e}}_{t, \omega} + \alpha_t \mathcal{D} \overline{\mathbf{b}}_{t} \right\rVert_2 + \\
		& \quad \frac{\alpha_t P X}{\underline{J} (t+1)^{\tau_3}}.\\
	\end{split}
\end{equation}
As a consequence of Lemma~\ref{lem: main}, there exists $\mathcal{G}^*_t \succ 0$ such that $\mathcal{G}^*_t \overline{\mathbf{e}}_t = \widehat{K}_t\mathcal{G}_{\mathcal{N}} \overline{\mathbf{e}}_t + \overline{\mathbf{b}}_t$ with a minimum eigenvalue that satisfies $\lambda_{\min} \left(\mathcal{G}^*_t \right) \geq \widehat{K}_t \kappa$, where $\kappa = \lambda_{\min}\left(\mathcal{G}_{\mathcal{N}} \right) - \Delta_{\mathcal{A}}$ (see~\eqref{eqn: kappaDef}). Substituting for $\mathcal{G}^*_t$ into~\eqref{eqn: avgConv8} and performing algebraic manipulations, we have, for finite $C_2 > 0$ 
\begin{equation}\label{eqn: avgConv9}
	\begin{split}
		\left\lVert \overline{\mathbf{e}}_{t+1} \right\rVert_2 \leq &\left(1 - \alpha_t \widehat{K}_t C_3 \kappa \right)\left\lVert \overline{\mathbf{e}}_{t} \right\rVert_2  + \frac{\alpha_t P X}{\underline{J}(t+1)^{\tau_3}}.\\
	\end{split}
\end{equation}
Using the fact that $\widehat{K}_t >{\Gamma}\left({ (t+1)^{\tau_\gamma} \left(\left\lVert \overline{\mathbf{e}}_{t} \right\rVert_2 + X \right) }\right)^{-1}$, the relation in~\eqref{eqn: avgConv8} falls under the purview of Lemma~\ref{lem: timeVaryingSystem1}, and we have
%\begin{equation}\label{eqn: avgConv10}
	$\lim_{t \rightarrow \infty} \left( t+1 \right)^{\tau_0} \left\lVert \overline{\mathbf{e}}_{t} \right\rVert_2 = 0,$
%\end{equation}
for every $0 \leq \tau_0 < \tau_3 - \tau_{\gamma} = \tau_1 - \tau_2 - \epsilon_X$. Taking $\epsilon_X$ arbitrarily close to $0$ yields the desired result~\eqref{eqn: averageConv} and completes the proof.
\end{proof}
\end{document}